\newcommand{\ms}[1]{\mbox{\tiny$#1$}}
\newcommand{\epsh}[2]
{\begin{array}{c} \hspace{-1.3mm}
   \raisebox{-4pt}{\epsfig{figure=#1,height=#2}}
   \hspace{-1.9mm}\end{array}}
\newcommand{\slt}{{\mathfrak{sl}_2}}
\newcommand{\ith}{{\textsuperscript{th}} }
\newcommand{\ve}{\varepsilon}
\newcommand{\e}{{\operatorname{e}}}
\newcommand{\Id}{\operatorname{Id}}
\renewcommand{\Im}{\operatorname{Im}}
\newcommand{\End}{\operatorname{End}}
\newcommand{\Hom}{\operatorname{Hom}}
\newcommand{\Proj}{\operatorname{\mathsf{Proj}}}
\newcommand{\tr}{\operatorname{tr}}
\newcommand{\ptr}{\operatorname{ptr}}
\newcommand{\C}{\ensuremath{\mathbb{C}}}
\newcommand{\Z}{\ensuremath{\mathbb{Z}}}
\newcommand{\R}{\ensuremath{\mathbb{R}}}
\newcommand{\K}{\ensuremath{\mathbb{K}}}
\newcommand{\wh}{\widehat}
\newcommand{\wb}{\overline}
\newcommand{\si}{{\mu}}
\newcommand{\bp}[1]{{\left(#1\right)}}
\newcommand{\bcr}[1]{{\left\{#1\right\}}}
\newcommand{\qn}[1]{{\left\{#1\right\}}}
\newcommand{\cU}{\ensuremath{\mathcal{U}}}
\newcommand{\HH}{{\mathcal H}}
\newcommand{\RR}{{\mathcal R}}
\newcommand{\UK}{\cU^K}
\newcommand{\Uo}{\ensuremath{{\widetilde\U}}}
\newcommand{\sio}{\widetilde\si}
\newcommand{\cat}{{\operatorname{\mathscr{C}}}}
\newcommand{\mt}{\operatorname{\mathsf t}}
\newcommand{\Ho}{{\mathbf{H}}}
\newcommand{\U}{{\mathbf{U}}}
\newcommand{\HV}{{\mathrm{HV}}}
\newtheorem{theo}{Theorem}[section]
\newtheorem{Lem}[theo]{Lemma}
\newtheorem{Prop}[theo]{Proposition}
\newcounter{axi}
\theoremstyle{definition}
\newtheorem{Def}[theo]{Definition}
\newtheorem{rmq}[theo]{Remark}
\newcounter{exo} \newcounter{numexercice}
\renewcommand{\theexo}{\arabic{exo}}
\begin{document}
\title{Modified symmetrized integral in $G$-coalgebras}

\author{Nathan Geer}
\address{Utah State University, Department of Mathematics and Statistics, Logan UT 84341, USA}
\email{nathan.geer@usu.edu}
\author{Ngoc Phu HA}        
\address{Hung Vuong University, Faculty of Natural Sciences, Viet Tri, Phu Tho, Viet Nam}
\email{ngocphu.ha@hvu.edu.vn}
\author{Bertrand Patureau-Mirand}
\address{Universit\'e Bretagne Sud, Laboratoire de Math\'ematiques de Bretagne Atlantique, UMR CNRS 6205, Vannes, France}
\email{bertrand.patureau@univ-ubs.fr}

\maketitle

\begin{abstract}
  For $G$ a commutative group, we give a purely Hopf $G$-coalgebra
  construction of $G$-colored $3$-manifolds invariants using the
  notion of modified integral.
\end{abstract}

\vspace{25pt}

MSC:	57M27, 17B37.

Key words: Quantum group, ribbon Hopf $G$-coalgebra, Hennings type invariant, modified symmetrized integral.

\section{Introduction}
In \cite{Oh93}, Ohtsuki  defined the structure of a ribbon colored Hopf algebra and
he reveal such a structure based on 
a non-semisimple example coming from  
quantum $\slt$. 
He showed that a ribbon colored Hopf algebra leads to a universal link invariant taking values in some quotient of the colored Hopf algebra.  In the example of quantum $\slt$ this universal invariant recovers the ADO link invariant given in \cite{ADO}.  

A non commutative version of the notion of a ribbon colored Hopf algebra 
called a $G$-coalgebra was introduced by Turaev 
with the goal of producing Homotopy Quantum Field Theories (HQFTs),
see \cite{Turaev10}.  Virelizier showed that certain $G$-coalgebras gives rise to
universal invariants of a $G$-link (i.e.\  a link with a flat
connection in a principal $G$-bundle over the complement of the link).
Moreover, he shows that if the $G$-coalgebra has a $G$-trace this
universal invariant leads to a \emph{Hennings-Virelizier invariant} of
3 dimensional $G$-manifolds (3-manifolds endowed with a flat
$G$-bundle), see \cite{Vire01} and \cite[Appendix 7.2]{Turaev10}.
 Moreover, in
\cite[Theorem 7.4]{Vire02}, Virelizier shows that a $G$-trace exists
when the $G$-coalgebra has certain conditions (for example, when it is
semisimple and finite type).  Constructing non-trivial $G$-manifold
invariants in the non-semisimple and non-finite type setting seems to require new
techniques which we propose here: we
show that in a special case (when $G$ is commutative and the
$G$-coalgebra has certain conditions including a modified symmetrized integral)
Virelizier's construction can be re-normalized and leads to
non-trivial $G$-manifold invariants.  
In particular, we show that unrolled quantum group associated to $\slt$ gives
rise to $G$-coalgebra with modified symmetrized
integral and so an invariant of $G$-manifolds. The link invariant underlying this example is closely related to the universal link invariant defined by Ohtsuki in \cite{Oh93}. This link invariant was previously used to produce a representation theoretic Reshetikhin-Turaev type 3-manifold invariants in \cite{FcNgBp14}. 
Here we give an Hopf $G$-coalgebra approach to make a Hennings type 3-manifold invariants.  

The main new concept of this paper is the notion of a modified symmetrized integral on a $G$-coalgebra.  This notion is related and inspired by the theory of modified traces and symmetrized integrals given in \cite{BBG,GKP2011, GPV2013}. 
We show that modified symmetrized integrals exist in the general context of Theorem \ref{T:ModSymIntExist}.  This general context is inspired and modeled on topological unrolled quantum groups associated to simple Lie algebras, see \cite{GHP20}.     
However, the work of \cite{GHP20} requires technical topological completions where this paper uses more straightforward algebraic techniques.  

The outline and main results of the paper are described as follows.  
In Section \ref{S:VirelizierInv}, when $G$ is commutative, we 
describe 
Virelizier version of the universal invariant of $G$-links and its corresponding 3-dimensional $G$-manifold invariant which we call the graded Hennings-Virelizier invariant.    In this context, a $G$-manifolds $(M,\omega)$ is
just the data of a 3-manifold with a cohomology class $\omega\in H^1(M,G)$. 
In Section \ref{S:ModifiedIntInv} we give the notion of a (modified) symmetrized integral and show that such an integral can be used to renormalize the graded Hennings-Virelizier invariant.  In Section \ref{s:ex}  we show that Ohtsuki's ribbon colored Hopf algebra associated to $\slt$ leads to an example of the invariants defined in this paper.

\subsection*{Acknowledgments}
N.G.\ is partially
supported by NSF grants DMS-1664387 and DMS-2104497.  He would also like to thank the Max Planck Institute for Mathematics in Bonn for its hospitality during work on this paper. N. P. H.\ would like to thank the fundamental research program of Hung Vuong University. 

\section{Virelizier invariant of $G$-manifolds: the abelian case}\label{S:VirelizierInv}
Let $ \K$ be an integral domain.

\subsection{Ribbon Hopf $G$-coalgebra over abelian group}
The notion of a Ribbon Hopf $G$-coalgebra is a tool used to produce homotopy R-matrices and examples of non commutative HQFTs.
In this section we give a simplified version of a Hopf $G$-coalgebra in
the quasi-triangular case which coincide with Ohtsuki's notion of
colored Hopf algebra \cite{Oh93}.  In particular the group $G$ is
commutative and we use additive notation.
 
A Hopf $G$-coalgebra is a family $\Ho = \{\Ho_a\}_{a\in G}$ of
$\K$-algebras (with a product $m_a:\Ho_{a} \otimes
\Ho_a\to\Ho_a$ and a unit $\eta_a:\K\to\Ho_{a}$ for each $a\in G$) endowed with a
comultiplication
$\Delta = \{\Delta_{a,b} : \Ho_{a+b} \to \Ho_{a} \otimes
\Ho_b\}_{a,b\in G}$, a counit $\ve: \Ho_0 \to \K$ and  an
antipode $S = \{S_a : \Ho_a \to \Ho_{-a}\}_{a\in G}$ which satisfy
(see \cite{Oh93, Turaev10,Vire02}):
\begin{enumerate}
\item $\Delta$ is 
coassociative: for any $a,b,c\in G$,
  $$\left(\Delta_{a,b}\otimes \Id_{\Ho_c}\right)\circ \Delta_{a+b,c}=
  \bp{ \Id_{\Ho_a}\otimes \Delta_{b,c}}\circ
  \Delta_{a,b+c}, $$
\item $\ve$ is a counit:
  $\bp{ \Id_{\Ho_a}\otimes \ve}\circ \Delta_{a,0}=\bp{\ve\otimes
    \Id_{\Ho_a}}\circ \Delta_{0,a}=\Id_{\Ho_a}$,
\item $S$ is a family of 
bijective antipode:
  for any $a\in G$,
  $$m_a\circ\bp{S_{-a}\otimes \Id_{\Ho_a}}\circ
  \Delta_{-a,a}=\eta_a\circ \ve=m_a\circ\bp{\Id_{\Ho_{a}}\otimes
    S_{-a}}\circ \Delta_{a,-a},$$
\item $\Delta$ and $\ve$ are algebra maps (for the standard algebra
  structure of $\Ho_a\otimes\Ho_b$ and $\K$).
\end{enumerate}

Let us now recall some definitions from \cite{Vire02,Ha18}.
\begin{Def}[pivotal structure]
  Let $\Ho=\{\Ho_{a}\}_{a\in G}$ be a Hopf $G$-coalgebra.
  \begin{enumerate}
  \item A \emph{$G$-grouplike} element is a family
    $\{x_{a}\in \Ho_{a}\}_{a\in G}$ such that
    $\Delta_{a,b}(x_{a+b})=x_{a}\otimes x_{b}$, for all
    $a,b\in G$ and $\ve(x_0)=1$.
  \item A \emph{pivot} for $\Ho$ is a $G$-grouplike element
    $\{g_{a}\}_{a\in G}$ such that, 
    for all $ a\in G$ for any $x\in \Ho_a$, 
    $S_{-a}S_a(x)=g_axg_a^{-1}$.  If 
    $\Ho$ has a pivot we say it is a \emph{pivotal} Hopf
    $G$-coalgebra.
   \end{enumerate}
\end{Def}

\begin{Def}[quasitriangular structure]\label{Def:quasitriangular} 
Let  $\tau: \Ho_{a}\otimes \Ho_{b}\to \Ho_{b}\otimes \Ho_{a}$ be the linear map
 defined by $x\otimes y \mapsto y\otimes x$.  A Hopf $G$-coalgebra
  $\Ho=\{\Ho_{a}\}_{a\in G}$ is quasitriangular if it has an R-matrix,
  which is a family of invertible elements
  $\RR=\{\RR_{a, b}\in \Ho_a\otimes \Ho_b\}_{a,b\in G}$ satisfying for
  any $a,b,c\in G$
  \begin{enumerate}
  \item\label{E:Rtau}$\RR_{a,b}.\Delta_{a,b}(x)=\tau\bp{\Delta_{b,a}(x)}.\RR_{a,b}$
    for any $x\in \Ho_{a+b}$,
  \item \label{E:DeltaR12}
    $\bp{\Id_{\Ho_a}\otimes
      \Delta_{b,c}}\RR_{a,b+c}=\bp{\RR_{a,c}}_{1b3}\bp{\RR_{a,b}}_{12c}$,
  \item \label{E:DeltaR23}
    $\bp{\Delta_{a,b}\otimes
      \Id_{\Ho_c}}\RR_{a+b,c}=\bp{\RR_{a,c}}_{1b3}\bp{\RR_{b,c}}_{a23}$
  \end{enumerate}
  where
$\bp{\RR_{a,c}}_{1b3}=r_a\otimes 1_b\otimes s_c,\  \bp{\RR_{a,b}}_{12c}=r_a\otimes s_b\otimes 1_c $ and $\bp{\RR_{b,c}}_{a23}=1_a\otimes r_b\otimes s_c$ uses the notation  $r_a\otimes s_b$ (with an implied summation) for the element $\RR_{a,b}\in \Ho_a\otimes \Ho_b$.
\end{Def}

In \cite{Vire02},  Virelizier defines a ribbon Hopf $G$-coalgebra using a ribbon element (a similar style of definition is given in \cite{Oh93}).  Here we give an equivalent definition in terms of the pivotal structure and a twist element.  
\begin{Def}[ribbon structure]\label{D:RibbonStructure}
  A pivotal quasitriangular Hopf $G$-coalgebra is \emph{ribbon} if for any
  $a\in G$ the element 
  $\theta_a=m_a(\tau((g_a\otimes1_a)R_{a,a}))$
  satisfies
    \begin{equation}
      \label{E:twist}
      \theta_a=m_a\bp{(1_a\otimes g_{a}^{-1})R_{a,a}}.
    \end{equation}
\end{Def}
\begin{Def}[symmetrized integral]\label{D:si}
  A \emph{symmetrized integral} for a pivotal Hopf $G$-coalgebra is a family
  of linear forms $\si=\{\si_a\in \Ho_{a}^*\}_{a\in G}$ such that
  \begin{align}
    \bp{\si_{a}\otimes g_{b}}\Delta_{a,b}(x)
    &=\si_{a+b}(x)1_{b} \text{ for }
      x\in\Ho_{a+b},\label{eq:rightsi}\\
    \si_{a}(xy)&=\si_{a}(yx)\ \text{for}\ x, y\in \Ho_{a},\label{eq:mucyc}\\
    \si_{-a}\bp{S_{a}(x)}&=\si_{a}(x)\ \text{for}\
                           x\in\Ho_{a}.\label{eq:muS}
  \end{align}
  A symmetrized integral  is {\em twist non
    degenerate} if the scalar
  $$\mu_0(g_0\theta_0)\mu_0(g_0^{-1}\theta_0^{-1})$$
is non zero. 
\end{Def}
It is known (see \cite{Vire02}) that if $\Ho$ is ribbon and finite type
(i.e. $\dim(\Ho_a)<\infty$ for all $a\in G$), then $\Ho$ has a unique up to
a scalar symmetrized integral closely related to the right integral.
However, the finiteness is not a necessary
conditions as we shall see in Section \ref{s:ex}.   
\subsection{The universal invariant of $G$-links}\label{ss:univ-inv}
Let $\Ho=\{\Ho_{a}\}_{a\in G}$ be a ribbon Hopf $G$-coalgebra and let
$\text{H\!H}_0(\Ho_{a})=\Ho_{a}/[\Ho_{a},\Ho_{a}]$ where $[\Ho_{a},\Ho_{a}]$ is the subspace of $\Ho_{a}$ spanned by $xy-yx$ for $x,y\in \Ho_{a}$. 
\begin{Def}
  A $G$-link is a couple $(L,\omega)$ where $L$ is an oriented framed
  link embedded in $\R^3$ and $\omega$ is a map from the set of
  components of $L$ to $G$.  To reduce some technicalities, we will consider
  $G$-links with ordered components.
\end{Def}

  Let $L$ be  a $G$-link and denote the color of its $i^{th}$-component by $a_i\in G$.  
The universal invariant of $L$ is an element
$J(L,\omega)\in\bigotimes_i \text{H\!H}_0(\Ho_{a_i})$ obtained as follows.  
Let $D$ be a regular planar diagram of $L$ where each component has a marked point.     For each crossing, cup and cap put beads colored with elements of the algebra determined by the following diagrams:
\begin{equation*}
\fbox{$
  \begin{array}{c}\\
\quad
\epsh{fig1_12}{4ex}
\put(-16,1){\ms{1_{a_i}}} \qquad \epsh{fig1_10}{4ex} \put(-17,3){\ms{1_{a_i}}}
\qquad \epsh{fig1_11}{4ex} \put(-35,0){\ms{g_{a_i}}} \qquad
\epsh{fig1_9}{4ex} \put(-33,2){\ms{{g^{-1}_{a_i}}}}\ \\\\
    \qquad \qquad 
    \epsh{fig9a}{8ex} \put(-28,-3){\ms{r_{a_i}}} \put(-3,-3){\ms{s_{a_j}}}
    \qquad \qquad \qquad
    \epsh{fig9d}{8ex} \put(-63,11){\ms{S_{-a_j}(r_{-a_j}})}\put(-3,10){\ms{s_{a_i}}}
    \qquad \qquad
    \epsh{fig9c}{8ex} \put(-28,-3){\ms{r_{a_i}}} \put(-3,-3){\ms{S_{-a_j}(s_{-a_j})}}
    \qquad \qquad \qquad
    \epsh{fig9b}{8ex} \put(-28,11){\ms{r_{a_j}}}\put(-3,10){\ms{s_{a_i}}} 
    \qquad \qquad
    \\\\
 \end{array}$}
\end{equation*}
where the $i$ and $j$ refer to the ordering of the components of $L$
and as above we write $r_{a_i}\otimes s_{a_j}$ for $\RR_{a_i,a_j}$.
The four remaining possible orientations at a crossing are obtained by
reversing simultaneously the orientation of the two strands in the
figure above.
For each component of $D$ (starting at the marked point) we multiply these colored beads with the following rules:
\begin{equation}
  \label{eq:mult-beads}
  \epsh{fig3_3u2}{10ex}\put(-12,8){\ms x}\put(-12,-9){\ms y}\quad
  \equiv\quad\hspace{1ex}\epsh{fig3_3u1}{10ex}\put(-15,2){\ms {xy}}\qquad\text{ and }\qquad
  \epsh{fig3_3d2}{10ex}\put(-12,12){\ms x}\put(-12,-5){\ms y}\quad
  \equiv\quad\hspace{1ex}\epsh{fig3_3d1}{10ex}\put(-15,2){\ms {yx}}\quad.
\end{equation}
Also beads can freely move around cap and cup with any orientation
\begin{equation}
  \label{eq:cap-beads}
  \epsh{capb1}{4ex}\put(-30,-1){\ms x}\quad\equiv\quad
  \epsh{capb2}{4ex}\put(-10,-1){\ms {x}}\qquad\text{ and }\qquad
  \epsh{cupb1}{4ex}\put(-30,4){\ms x}\quad\equiv\quad
  \epsh{cupb2}{4ex}\put(-10,4){\ms {x}}\quad.
\end{equation}
and it can pass through a crossing.  For the $i^{th}$ component, from this process we obtain an element in $\Ho_{a_i}$ and consider its image in $\text{H\!H}_0(\Ho_{a_i})=\Ho_{a_i}/[\Ho_{a_i},\Ho_{a_i}]$.  We define the collection of these images as  $J(D,\omega)\in \bigotimes_{i=1}^n \text{H\!H}_0(\Ho_{a_i})$ where $n$ is the number of components of $L$.  
The following theorem is a graded version of Lawrence-Ohtsuki universal
invariant (\cite{RJLa89,Oh93}).  
\begin{theo}\label{T:univ-inv}
  The element
  $J(D,\omega)$ is a 
  diffeomorphism invariant of the $G$-link $(L,\omega)$.  We call this invariant the \emph{universal $\Ho$ invariant of $L$} and denote it by  $J(L,\omega)$.\end{theo}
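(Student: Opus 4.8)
The plan is to show that $J(D,\omega)$ depends only on the framed oriented isotopy class of $(L,\omega)$ and neither on the chosen diagram $D$ nor on the placement of the marked points. Since any two diagrams of ambient-isotopic framed oriented links are related by a finite sequence of local moves --- planar isotopy, the oriented Reidemeister moves $\mathrm{RII}$ and $\mathrm{RIII}$, the framed version of $\mathrm{RI}$, and the snake (straightening) moves at cups and caps --- it suffices to check that the bead-reading procedure produces the same class in $\bigotimes_i \text{H\!H}_0(\Ho_{a_i})$ before and after each move. The passage to $\text{H\!H}_0(\Ho_{a_i})=\Ho_{a_i}/[\Ho_{a_i},\Ho_{a_i}]$ is exactly what makes this well posed: reading a closed component from a different marked point cyclically permutes the ordered product of its beads, and cyclic permutations are trivial modulo commutators, so marked-point independence is immediate. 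The multiplication and sliding conventions \eqref{eq:mult-beads} and \eqref{eq:cap-beads} then give invariance under planar isotopy.

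For the genuinely three-dimensional moves I would match each one with an axiom of the ribbon Hopf $G$-coalgebra, keeping careful track of the $G$-grading. Reidemeister $\mathrm{II}$ amounts to the statement that the beads contributed by a crossing and its reverse cancel to $1\otimes 1$; this is exactly invertibility of $\RR_{a,b}$ together with an identity expressing the inverse R-matrix through the antipode (in graded form $\RR_{a,b}^{-1}=(S_{-a}\otimes\Id)\RR_{-a,b}$, compare the bead $S_{-a_j}(r_{-a_j})$ in the figures), which is forced by axioms \eqref{E:Rtau}--\eqref{E:DeltaR23} and the counit. Reidemeister $\mathrm{III}$ is the graded Yang--Baxter equation for the family $\RR$, obtained by combining \eqref{E:DeltaR12} and \eqref{E:DeltaR23} with the quasi-cocommutativity relation \eqref{E:Rtau}. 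The straightening moves, where a strand of color $a$ turns back and continues with color $-a$, are handled by the antipode axiom together with the pivot: the $g_a^{\pm1}$ beads attached to the cups and caps are precisely what is needed for $S$ and $S^{-1}$ to glue consistently, using $S_{-a}S_a(x)=g_axg_a^{-1}$.

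Finally, the framed $\mathrm{RI}$ move produces the twist: reading a positive kink yields the element $\theta_a$ of Definition~\ref{D:RibbonStructure}, while a curl of the opposite handedness yields the right-hand side of \eqref{E:twist}, and the ribbon relation asserts these agree, so that the framing is consistently encoded. The remaining compatibility implicit in the construction, namely that a bead may pass through a crossing or slide around a cup or cap, follows from \eqref{E:Rtau} and from the $G$-grouplike property $\Delta_{a,b}(g_{a+b})=g_a\otimes g_b$ of the pivot. Assembling these local verifications shows that the class $J(D,\omega)$ is unchanged under every generating move, hence depends only on $(L,\omega)$.

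I expect the main obstacle to be the bookkeeping forced by the grading at turnbacks and at crossings of mismatched colors: every cup and cap flips the color $a\mapsto -a$ and replaces elements of $\Ho_a$ by elements of $\Ho_{-a}$ through the antipode, so one must verify that the colors $a_i,a_j$ and their sums appearing in \eqref{E:DeltaR12}, \eqref{E:DeltaR23}, and in the $g_a$-beads line up correctly in each local picture. Unlike the classical ungraded case, the R-matrix, pivot, and twist live in color-dependent algebras, so each move has to be read as an identity between elements of a specific $\Ho_a$ (or $\Ho_a\otimes\Ho_b$), and the reduction modulo $[\Ho_{a_i},\Ho_{a_i}]$ must be invoked exactly where the ordered product around a closed component would otherwise obstruct the move.
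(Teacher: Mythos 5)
Your proposal is correct and follows essentially the same route as the paper: marked-point independence from the quotient by $[\Ho_{a_i},\Ho_{a_i}]$, planar isotopy from the cup/cap conventions, Reidemeister II and III from the graded consequences of quasitriangularity (inverse R-matrix via the antipode, Yang--Baxter, $(S_a\otimes S_b)\RR_{a,b}=\RR_{-a,-b}$), and the framed first move from the ribbon relation \eqref{E:twist}. The only cosmetic difference is that the paper handles the kink move by first switching the side of the loop using the ribbon condition and then unknotting with RII/RIII, rather than equating the two curl readings directly as you do.
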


\begin{proof}
First, because we quotient by the subspace $[\Ho_{a_i},\Ho_{a_i}]$ for each $i$, the element $J(D,\omega)$ does not depend on the choice of the marked points on each component.  
We need to check that if $D$ and $D'$ are two regular planar diagram of $L$ then  $J(D,\omega)=J(D',\omega)$.  It is well known that $D$ and $D'$ are related by isotopy in $\R^2$ and a finite sequence of Reidemeister II and III moves and  the move:
\begin{equation}\label{E:TureavDig1}
\epsh{fig6a}{8ex}\quad\longleftrightarrow\quad\epsh{fig6b}{8ex}
\end{equation}
 Thus, to prove the theorem we need to show these moves can be translated into identities satisfied by the ribbon Hopf $G$-coalgebra $\Ho$.   To show the assignment is invariant under isotopies in $\R^2$ it is enough to show it satisfies the move:
 \begin{equation*}\label{E:TureavDig2}
\epsh{fig7a}{8ex}\ \longleftrightarrow\ \epsh{fig6b}{8ex}\ \longleftrightarrow\ \epsh{fig7b}{8ex}\qquad
\epsh{fig7c}{8ex}\ \longleftrightarrow\ \epsh{fig6c}{8ex}\ \longleftrightarrow\ \epsh{fig7d}{8ex}
\end{equation*}
This move follows directly from the definition of the universal invariant on cups and caps.

From Proposition 1.4 of \cite{Oh93}, the quasitriangular structure of $\Ho$ implies 
\begin{equation}\label{E:QuStrImplies1}
\RR_{a,b}^{-1}=(1_{a}\otimes S^{-1}_b)\RR_{a,-b}=(S_{-a}\otimes 1_b)\RR_{-a,b},
\end{equation}
\begin{equation}\label{E:QuStrImplies2}
\bp{\RR_{a,b}}_{12c}\bp{\RR_{a,c}}_{1b3}\bp{\RR_{b,c}}_{a23}=
\bp{\RR_{b,c}}_{a23}\bp{\RR_{a,c}}_{1b3}\bp{\RR_{a,b}}_{12c},
\end{equation}
\begin{equation}\label{E:QuStrImplies3}
(S_a \otimes S_b)\RR_{a,b}=\RR_{-a,-b}.
\end{equation}
These relations imply that the Reidemeister II and III moves with all orientations of strands hold.  

Finally, we need to prove Move \eqref{E:TureavDig1}.  Since $\Ho$ is
ribbon then the condition in Definition \ref{D:RibbonStructure}
implies that the universal invariant satisfies the move:
\begin{equation*}
\epsh{fig8a}{8ex}\ \longleftrightarrow\ \epsh{fig8b}{8ex}
\end{equation*}
After applying this move to the first loop in the first diagram of
Equation \eqref{E:TureavDig1} we can apply Reidemeister II and III
moves to unknot the diagram.
\end{proof}

We now give two well known properties of the universal invariant, analogous propositions have been proved in related situations, see \cite{Vire01} and \cite{GHP20}.  
\begin{Prop}\label{P:antipode}
  Let $(L,\omega)$ and $(L',\omega')$ be $G$-links such that $(L',\omega')$ is be obtained from
  $(L,\omega)$ by reversing the orientation of the $i$\textsuperscript{th}
  component and changing the corresponding value of $\omega$ to its
  opposite.  Then $J(L',\omega')$ is obtained from $J(L,\omega)$ by
  applying the antipode to the $i$\textsuperscript{th} factor.
\end{Prop}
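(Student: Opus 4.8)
The plan is to track how reversing the orientation of a single component affects each local contribution to the universal invariant, and to verify that the cumulative effect is precisely the application of the antipode to the corresponding tensor factor.

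First I would fix a diagram $D$ for $(L,\omega)$ and produce a diagram $D'$ for $(L',\omega')$ by reversing the orientation of the $i$\textsuperscript{th} component (and replacing its color $a_i$ by $-a_i$). The key observation is that reversal swaps the roles of ``up'' and ``down'' strands along that component, which simultaneously interchanges the cup/cap beads, reverses the order in which beads are multiplied along the component (compare the two rules in \eqref{eq:mult-beads}), and changes the crossing beads according to the orientation-reversed versions of the four pictures defining $J$. So the heart of the argument is bookkeeping: I would show that each of these local changes, when pushed into $\text{H\!H}_0$, matches the effect of $S_{a_i}:\Ho_{a_i}\to\Ho_{-a_i}$.

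The main technical steps would be: (1) reversing the orientation turns the pivot bead $g_{a_i}$ into $g_{-a_i}^{\pm1}$, consistent with $\{g_a\}$ being $G$-grouplike, so the framing/pivot contributions transform as $S$ demands; (2) at each crossing where the reversed strand participates, I would use the identities \eqref{E:QuStrImplies1} and \eqref{E:QuStrImplies3} — which were established in the proof of Theorem~\ref{T:univ-inv} from the quasitriangular structure — to rewrite the R-matrix beads $r_{a_i}, s_{a_i}$ (or their $S$-twisted forms) in terms of the beads appearing on the corresponding $-a_i$ strand, the excess always collapsing to an application of $S_{a_i}$; and (3) since $S$ is an antialgebra map and the bead-multiplication rule in \eqref{eq:mult-beads} reverses order upon orientation reversal, the two order reversals cancel, so $S_{a_i}$ applied to the whole word equals the word read off from $D'$. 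Here I would invoke the antipode axiom (3) and the compatibility of $S$ with $\Delta$ to handle the beads coming from a single crossing split between the two strands.

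The hardest part will be step (2): carefully matching the four orientation-reversed crossing beads against $S_{a_i}$ applied to the original beads, because a crossing may involve the reversed component on one strand and an unchanged component on the other, so the rewriting must use \eqref{E:QuStrImplies1} ``one factor at a time'' and keep track of which tensor leg the antipode lands on. Once the local crossing identities are verified, the global claim follows by passing to the quotient $\text{H\!H}_0(\Ho_{-a_i})$, where cyclic invariance absorbs the remaining discrepancy in the position of the marked point; I would conclude that $J(D',\omega') = (\Id\otimes\cdots\otimes S_{a_i}\otimes\cdots\otimes\Id)\,J(D,\omega)$, and by Theorem~\ref{T:univ-inv} this is independent of the chosen diagrams, proving the proposition.
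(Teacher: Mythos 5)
Your outline is correct and is essentially the argument the paper itself relies on: the paper's ``proof'' is only a citation to \cite[Lemma 5.4]{GHP20} (asserting that the proof there works word for word once the grading is inserted on $S$, $g$ and $\RR$), and that proof is precisely the local bookkeeping you describe --- cups/caps handled by the grouplike pivot (so $S_{a_i}(g_{a_i})=g_{-a_i}^{-1}$), crossings handled by \eqref{E:QuStrImplies1} and \eqref{E:QuStrImplies3}, and the antialgebra property of $S$ cancelling the reversal of the multiplication order in \eqref{eq:mult-beads}, with $\text{H\!H}_0$ absorbing the marked-point ambiguity. So you are reconstructing, rather than replacing, the proof the paper delegates to the reference; I see no gap in your plan.
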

\begin{proof}
 This proposition was proved in a special case of certain  unrolled quantum groups in      \cite[Lemma 5.4]{GHP20}.  The proof of \cite[Lemma 5.4]{GHP20} works word for word in the context of this paper after adding the grading on the antipode map, pivotal element and the R-matrix. 
\end{proof}

\begin{Prop}\label{P:doubling}
  Let $(L,\omega)$ be a $G$-link and $(L',\omega')$ be obtained from
  $L$ by replacing the $i$\textsuperscript{th} component colored by
  $a+b\in G$ by two parallel copies colored by $a$ and $b$
  respectively. Then $J(L',\omega')$ is obtained from $J(L,\omega)$ by
  applying the coproduct $\Delta_{a,b}$ to the $i$\textsuperscript{th}
  factor.
\end{Prop}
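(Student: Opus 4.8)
The plan is to prove the statement locally, one elementary piece of the diagram at a time, exploiting that the coproduct $\Delta_{a,b}$ is an algebra homomorphism and that it is compatible with the R-matrix and the pivot. First I would fix a regular diagram $D$ of $(L,\omega)$ in which the $i$\textsuperscript{th} component, colored $a+b$, carries a single marked point, and build a diagram $D'$ of $(L',\omega')$ by pushing the $i$\textsuperscript{th} strand off itself in the blackboard framing to obtain two parallel copies, coloring them $a$ and $b$ and placing their marked points next to the original one. By Theorem \ref{T:univ-inv} the elements $J(L,\omega)=J(D,\omega)$ and $J(L',\omega')=J(D',\omega')$ do not depend on these choices, so it suffices to compare the two reading procedures on this particular pair $D,D'$.

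The reduction to a local check rests on two observations. On the algebraic side, the fact that $\Delta_{a,b}$ is an algebra map (the last of the defining axioms) means it carries the product of the beads read off along the $i$\textsuperscript{th} component of $D$ to the product of their $\Delta_{a,b}$-images; moreover, since $\Delta_{a,b}[x,y]=[\Delta_{a,b}(x),\Delta_{a,b}(y)]$ lies in $[\Ho_a,\Ho_a]\otimes\Ho_b+\Ho_a\otimes[\Ho_b,\Ho_b]$, the map $\Delta_{a,b}$ descends to $\text{H\!H}_0(\Ho_{a+b})\to\text{H\!H}_0(\Ho_a)\otimes\text{H\!H}_0(\Ho_b)$, which is what makes the claimed formula well defined on the cyclic quotients. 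On the geometric side, doubling the $i$\textsuperscript{th} strand replaces each elementary feature on it (a crossing, a cup, or a cap) by the corresponding doubled feature, and the beads of $D'$ sitting on the two new components factor through these doubled features. Thus it is enough to match, feature by feature, the $\Delta_{a,b}$-image of the bead of $D$ with the beads produced by $D'$.

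The feature-by-feature verification is then a direct reading of the defining relations. At a crossing of the $i$\textsuperscript{th} strand with another component colored $c$, doubling turns the single crossing into two parallel crossings, and the identity $\Delta_{a,b}(r_{a+b})=r_a\otimes r_b$ distributing the overstrand bead onto the $a$- and $b$-copies is exactly relation \eqref{E:DeltaR23}, while the understrand case is relation \eqref{E:DeltaR12}; the orientations involving a downward strand are handled by the antipode-twisted forms of the R-matrix coming from \eqref{E:QuStrImplies1} and \eqref{E:QuStrImplies3}, since $\Delta$ intertwines $S$ up to the flip $\tau$ and these corrections cancel after passing to $\text{H\!H}_0$. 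At a cup or a cap the bead is $1_{a+b}$ or $g_{a+b}^{\pm1}$, and since $\{g_a\}_{a\in G}$ is $G$-grouplike we have $\Delta_{a,b}(g_{a+b})=g_a\otimes g_b$, whence $\Delta_{a,b}(g_{a+b}^{-1})=g_a^{-1}\otimes g_b^{-1}$ and $\Delta_{a,b}(1_{a+b})=1_a\otimes1_b$; geometrically the doubled cup (resp.\ cap) is a pair of nested cups (resp.\ caps) carrying these very beads on the two copies.

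The step I expect to be the main obstacle is the self-crossing of the $i$\textsuperscript{th} component, where both strands get doubled and the single crossing is replaced by four crossings carrying $\RR_{a,a},\RR_{a,b},\RR_{b,a},\RR_{b,b}$. Here one must check that the combined doubling relation $(\Delta_{a,b}\otimes\Delta_{a,b})\RR_{a+b,a+b}$, obtained by applying \eqref{E:DeltaR12} and then \eqref{E:DeltaR23} (or conversely), agrees with the product of the four R-matrix beads read off the doubled picture in the order dictated by the planar diagram, and to confirm that the two beads $r_{a+b}$ and $s_{a+b}$, which live at distinct heights on the single original word, distribute correctly onto the four new strands. Keeping track of the crossing signs and of the orientations through this fourfold splitting, and verifying that no spurious reordering is introduced before one quotients by the commutators, is the delicate bookkeeping; once it is in place, multiplying the matched beads along each component and passing to $\bigotimes_j\text{H\!H}_0(\Ho_{a_j})$ yields $J(L',\omega')=(\Id\otimes\cdots\otimes\Delta_{a,b}\otimes\cdots\otimes\Id)J(L,\omega)$, as claimed.
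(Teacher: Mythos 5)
Your proposal is correct and follows essentially the same route as the paper's proof: reduce to a feature-by-feature check on elementary diagrams, use relations \eqref{E:DeltaR12} and \eqref{E:DeltaR23} at crossings and the grouplike property of the pivot at cups and caps. The extra points you raise (well-definedness of $\Delta_{a,b}$ on the cyclic quotients, the self-crossing and reversed-orientation cases) are exactly the bookkeeping the paper leaves implicit, and your treatment of them is sound.
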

\begin{proof}
To prove this proposition it is enough to check on elementary diagrams consisting of a cup, cap or crossing.    For a crossing the relations \eqref{E:DeltaR12} and \eqref{E:DeltaR23} of Definition \ref{Def:quasitriangular} imply the desired equality of beads.  For a cup of cap the equality on beads follows from the fact that the pivotal element is grouplike.
\end{proof}

\subsection{Invariant of $G$-manifolds} From now on we assume  $\K$ is an algebraic closed field. 
A \emph{$G$-manifold} is a pair $(M,\omega)$ where $M$ is an oriented closed $3$-manifold and $\omega\in H^1(M,G)$.
A surgery presentation of a $G$-manifold $M$ is a $G$-link $L$ in
$S^3$ such that $S^3_L$ is homeomorphic to $M$.  Note, we put an ordering on the components of $L$.  Such a link is $G$-colored where the color of the $i^{th}$ 
component of $L$ is defined by the value of $\omega$ on its oriented
meridian in $S^3\setminus L\subset M$ and denoted by $a_i$.

In \cite{Vire01}, Virelizier proves the following theorem, we give a sketch of the proof because similar ideas will be used below when we re-normalize this invariant.
Let $\Ho$ be a ribbon Hopf $G$-coalgebra with a twist non-degenerate symmetrized integral (we assume $\mu_0(g_0^{-1}\theta_0^{-1})^{-1}=\mu_0(g_0\theta_0)=\delta$ for some non zero element $\delta\in \K$, which is always possible, up to rescaling, since $\K$ has square roots).  
\begin{theo}\label{T:InvGman}
For each $G$-manifold
$(M,\omega)$ the assignment 
$$\HV(M,\omega)=\delta^{-s}\bigotimes_{i=1}^n \mu_{a_i}\bp{J(L,\omega)}\in \K, $$
is an invariant of $G$-manifolds 
where $s\in\Z$ is the signature of the linking matrix of $L$. 
\end{theo}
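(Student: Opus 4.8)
The plan is to establish that $\HV(M,\omega)$ depends only on the pair $(M,\omega)$ and not on the chosen surgery presentation $L$. Since any two $G$-links $L,L'$ giving rise to homeomorphic $G$-manifolds $(M,\omega)$ are related by a finite sequence of the (oriented, $G$-colored) Kirby moves, it suffices to check invariance under each Kirby move separately. First I would recall that by Theorem \ref{T:univ-inv} the quantity $\bigotimes_i \mu_{a_i}(J(L,\omega))$ is already a well-defined diffeomorphism invariant of the underlying $G$-link, so the only thing that can change under a Kirby move is the presentation of $M$ itself. The signature $s$ of the linking matrix, and hence the normalization factor $\delta^{-s}$, must then be tracked carefully across each move so that the two effects cancel.

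The main computation is the behavior under the two Kirby moves. For the handle-slide (Kirby II) move, the key point is that sliding one component over another does not change the underlying $3$-manifold, and one checks that $\bigotimes_i \mu_{a_i}(J(L,\omega))$ is unchanged: here I would use Proposition \ref{P:doubling}, which lets one express the doubled/parallel component in terms of the coproduct $\Delta_{a,b}$, together with the defining cyclicity and invariance properties of the symmetrized integral in Definition \ref{D:si} (especially \eqref{eq:rightsi} and \eqref{eq:mucyc}). The $G$-grading must be respected throughout, so the colors $a_i$ add as dictated by the flat connection $\omega$ on meridians. Since a handle-slide does not change the signature of the linking matrix (it is a congruence by an integer elementary matrix of determinant $\pm1$), the factor $\delta^{-s}$ is untouched, and invariance follows. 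For orientation reversal of a component one invokes Proposition \ref{P:antipode} together with property \eqref{eq:muS}.

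The delicate move is the stabilization (Kirby I / blow-up) move, which adds a disjoint $\pm1$-framed unknot colored by $0\in G$ (its meridian bounds a disk, so the flat connection is trivial there). Adding such a component multiplies $\bigotimes_i \mu_{a_i}(J(L,\omega))$ by the scalar $\mu_0$ evaluated on the universal invariant of a $\pm1$-framed $0$-colored unknot; by the ribbon structure (Definition \ref{D:RibbonStructure}) and the computation of the twist $\theta_0$, this scalar is exactly $\mu_0(g_0\theta_0)=\delta$ for the $+1$-framed unknot and $\mu_0(g_0^{-1}\theta_0^{-1})=\delta^{-1}$ for the $-1$-framed unknot. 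Meanwhile the signature $s$ increases or decreases by $1$, so $\delta^{-s}$ changes by the inverse factor $\delta^{\mp 1}$, and the product $\delta^{-s}\bigotimes_i \mu_{a_i}(J(L,\omega))$ is preserved.

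The hard part will be pinning down the twist-normalization scalar precisely: one must verify that the value of $\mu_0$ on the $\pm1$-framed $0$-colored unknot equals $\mu_0(g_0^{\pm1}\theta_0^{\pm1})$, which is where twist non-degeneracy in Definition \ref{D:si} enters to guarantee $\delta\neq 0$ so that $\delta^{-s}$ makes sense. Once the two stabilization signs are matched with the two generators $\mu_0(g_0\theta_0)$ and $\mu_0(g_0^{-1}\theta_0^{-1})$, and the handle-slide invariance is secured via Propositions \ref{P:doubling} and \ref{P:antipode}, the theorem follows by assembling these checks across the Kirby calculus. I would finally remark that the argument is the graded analogue of Virelizier's proof in \cite{Vire01}, the only new ingredient being that the colors live in $G$ and the integrals $\mu_a$ are indexed by the grading.
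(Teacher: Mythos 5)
Your proposal is correct and follows essentially the same route as the paper: invariance is checked move-by-move under the colored Kirby calculus, with orientation reversal handled by Proposition \ref{P:antipode} and \eqref{eq:muS}, handle-slides by Proposition \ref{P:doubling} together with \eqref{eq:rightsi}, and stabilization by matching $\mu_0(g_0^{\pm1}\theta_0^{\pm1})=\delta^{\pm1}$ against the $\pm1$ change in the signature $s$. The paper's proof is an equally brief sketch crediting Virelizier, so no further detail is missing relative to it.
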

\begin{proof}
Any two presentations of $M$ are
  related by isotopy, orientation reversal of components of $L$ and
  colored Kirby I and II moves which are a stabilization with $\pm1$ framed $0$-colored unknot and sliding along a component of $L$, respectively.  The colored Kirby II move is represented by
  $$\epsh{fig5a}{12ex}\put(-38,4){\ms{a+b}}\put(0,0){\ms{b}}
  \quad\leftrightarrow\quad
  \epsh{fig5b}{12ex}\put(-22,4){\ms{a}}
  \put(2,-20){\ms{b}}\put(-13,-18){\ms{g_b}}\;\;\;.
  $$
  To prove the theorem we will show $\HV$ is unchanged under these operations.   
  
   Since
  $\si\circ S=\si$ then Proposition \ref{P:antipode} implies $\HV$ is invariant by orientation reversal of
  a component of $L$.  A Kirby I move increases the signature of $L$ by $\pm1$ and also multiplies the invariant by $\mu_{0}((g_0\theta_0)^{\pm1})=\delta^{\pm1}$.  Thus, $\HV$ is
  globally unchanged under a Kirby I move.  
  Finally, invariance by a colored Kirby II move
  follow from Proposition \ref{P:doubling} and the first property of
  the symmetrized integral.   Remark that the invariance under this move only uses that the
  components colored with $a+b$ and $a$ on the picture and are evaluated with
  the symmetrized integral.
\end{proof}
We call $\HV$ the \emph{graded Hennings-Virelizier invariant}.   When the grading is trivial (i.e.\ $G=0$) then $\HV$ is the Hennings invariant defined in \cite{Henning96}.
As we will see in Section \ref{s:ex} there is a Ribbon Hopf $\C/2\Z$-coalgebra $\U$, associated to $\slt$,  with a twist non-degenerate symmetrized integral.  The degree zero restriction of the invariant $\HV$ associated to $\U$ is the Hennings invariant associated to usual quantum group of $\slt$.  However, in this example $\HV$ vanishes in non-integral degree.  
Next we will discuss how to modify the integral so that one can produce a non-zero invariant where it previously vanished.  

\section{The modified symmetrized integral and modified invariant}\label{S:ModifiedIntInv}
\subsection{Ambidexterity of the symmetrized integral}
Let $\Ho$ be a finite type pivotal Hopf $G$-coalgebra with a
symmetrized integral $\si$.

For a linear endomorphism $f$ of a finite dimensional $\K$-vector
space $V\otimes W$ we denote by $\ptr^\K_W(f)\in\End_\K(V)$ the right
partial trace of $f$ given by
$\ptr^\K_W(f)(v)=\sum_i(\Id_V\otimes w_i^*)(f(v\otimes w_i))$ for any
basis $\{w_i\}$ of $W$ with dual basis $\{w_i^*\}$.  We define similarly
the left partial trace of $f$ denoted $\ptr^\K_V(f)\in\End_\K(W)$.   Remark that denoting by $\tr^\K$ the usual linear trace, we have $\tr^\K_V(\ptr^\K_W(f))=\tr^\K_W(\ptr^\K_V(f))=\tr^\K_{V\otimes W}(f)\in\K$.

Let $L_x: \Ho_a \to \Ho_a$ be the left multiplication by $x\in \Ho_a$.

\begin{Prop}
  Let $a,b\in G$ and 
  $f\in\End_\K(\Ho_{a}\otimes \Ho_{b})$ 
  be a
  morphism of $\Ho_{a+b}$-module (i.e.
  $f(\Delta_{a,b}(x).y\otimes z)=\Delta_{a,b}(x)f(y\otimes
  z)$). Then
  \begin{equation}
    \label{eq:ambi}
    \si_{a}(\ptr^\K_{\Ho_{b}}\bp{(\Id\otimes L_{g_{b}})f}(1_{a}))=
    \si_{b}(\ptr^\K_{\Ho_{a}}\bp{(L_{g_{a}^{-1}}\otimes \Id)f}(1_{b}))
  \end{equation}
\end{Prop}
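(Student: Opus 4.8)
The plan is to reduce the statement to a computation on the regular representation by exploiting the $\Ho_{a+b}$-module structure, and then to collapse each partial trace using the defining identities of the symmetrized integral. Both sides of \eqref{eq:ambi} are linear in $f$, so I would first describe all endomorphisms $f$ of the $\Ho_{a+b}$-module $\Ho_a\otimes\Ho_b$, where $\Ho_{a+b}$ acts diagonally through $\Delta_{a,b}$. The key structural tool is the map
\[ \Phi\colon \Ho_{a+b}\otimes\Ho_b\to\Ho_a\otimes\Ho_b,\qquad x\otimes y\mapsto \Delta_{a,b}(x)\,(1_a\otimes y), \]
which intertwines the left regular action on the first factor of the source with the diagonal action on the target. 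I would check that $\Phi$ is a bijection, with inverse $u\otimes v\mapsto u_{[1]}\otimes S_{-b}(u_{[2]})v$ where $\Delta_{a+b,-b}(u)=u_{[1]}\otimes u_{[2]}$; this is where the antipode and the grading bookkeeping enter. Since $\Ho_{a+b}\otimes\Ho_b$ is a free left $\Ho_{a+b}$-module, in the basis $\{w_i\}$ of $\Ho_b$ used to define the partial trace its module endomorphisms are exactly the maps $\tilde f(x\otimes w_j)=\sum_i x\,c_{ij}\otimes w_i$ for a matrix $(c_{ij})$ with entries in $\Ho_{a+b}$. Transporting through $\Phi$ gives a normal form $f=\Phi\,\tilde f\,\Phi^{-1}$ for the general module map, and by linearity it suffices to treat a single entry $c=c_{ij}$.

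Next I would produce the left-handed companion of \eqref{eq:rightsi}. The given identity pairs $\si_a$ on the first tensor factor with the pivot $g_b$ on the second; I need the mirror identity that pairs $\si_b$ on the second factor and produces the pivot $g_a$ on the first, i.e.\ a formula of the shape $\bp{\Id_{\Ho_a}\otimes\si_b}\Delta_{a,b}(x)=\si_{a+b}(x)\,g_a$ (with the precise placement of $g_a$ to be determined). This is obtained by applying \eqref{eq:rightsi} in the degrees $(-b,-a)$, substituting $S_{a+b}(x)$ for the variable, and then rewriting with the graded anti-coalgebra law $\Delta_{-b,-a}\circ S_{a+b}=\tau\circ(S_a\otimes S_b)\circ\Delta_{a,b}$, the antipode invariance \eqref{eq:muS}, and the pivot relation $S_{-a}S_a(\cdot)=g_a(\cdot)g_a^{-1}$.

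With these two identities in hand I would compute each side of \eqref{eq:ambi} on the normal form. Writing out $\ptr^\K_{\Ho_b}\bp{(\Id\otimes L_{g_b})f}(1_a)$ in the basis and applying $\si_a$, the diagonal $\Delta_{a,b}$-factors produced by $\Phi$ meet exactly the pattern of \eqref{eq:rightsi}, while the $g_b$-twist supplies the pivot that identity requires; symmetrically, $\si_b\circ\ptr^\K_{\Ho_a}\bp{(L_{g_a^{-1}}\otimes\Id)f}$ meets the companion identity, whose output pivot $g_a$ is cancelled by the twist $g_a^{-1}$. Each side thereby collapses to one and the same scalar attached to $(c_{ij})$, so \eqref{eq:ambi} holds on the normal form and hence, by linearity, in general. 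The cyclicity \eqref{eq:mucyc} is used to move the residual algebra factors past $\si_{a+b}$ so that the two collapsed expressions literally coincide.

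The main obstacle is the bookkeeping of the pivot elements and the gradings: verifying that $\Phi$ is an isomorphism with the antipodal inverse and intertwines the two module structures, deriving the left companion of \eqref{eq:rightsi} with the pivot on the correct side, and then checking that the $g_b$-twist on one side and the $g_a^{-1}$-twist on the other conspire with \eqref{eq:rightsi} and its companion to yield the \emph{same} $\si_{a+b}$-expression. Everything else is routine basis manipulation of partial traces.
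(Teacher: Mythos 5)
Your route is genuinely different from the paper's: the paper deduces \eqref{eq:ambi} from the theory of modified traces, citing \cite{BBG} and \cite{Ha18} to identify $\si$ with a two\nobreakdash-sided modified trace on projective $\Ho$-modules and then combining the left and right partial trace properties, whereas you aim for a self-contained computation. The first two steps of your plan are sound: $\Phi(x\otimes y)=\Delta_{a,b}(x)(1_a\otimes y)$ is indeed an isomorphism of left $\Ho_{a+b}$-modules with the antipodal inverse you describe, so module endomorphisms of $\Ho_a\otimes\Ho_b$ are exactly the conjugates by $\Phi$ of right multiplication by a matrix $C=(c_{ij})$ over $\Ho_{a+b}$; and the companion identity $g_a^{-1}x_{(1)}\,\si_b(x_{(2)})=\si_{a+b}(x)1_a$ does follow from \eqref{eq:rightsi}, \eqref{eq:muS}, the anti-comultiplicativity of $S$ and the pivot relation. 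With these, the left-hand side of \eqref{eq:ambi} does collapse to $\si_{a+b}\bp{\sum_i c_{ii}}$ exactly as you say.

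The gap is in the claim that the right-hand side ``symmetrically'' meets the companion identity in the same normal form. It does not: $\Phi$ is adapted to the right partial trace over $\Ho_b$, because $\Phi(x\otimes w_i)=x_{(1)}\otimes x_{(2)}w_i$ leaves the first tensor leg as a bare coproduct factor ready for \eqref{eq:rightsi}; but when you trace over $\Ho_a$ you must expand $\Phi^{-1}(u_k\otimes 1_b)$ for a basis $\{u_k\}$ of $\Ho_a$, and the result involves the triple coproduct $\Delta_{a,b,-b}(u_k)$ entangled with $S_{-b}$ and with the matrix units of $\Ho_b$ --- it is not in the shape to which the companion identity applies. To repair this within your strategy you need a second trivialization $\Phi''\colon\Ho_{a+b}\otimes\Ho_a\to\Ho_a\otimes\Ho_b$, $x\otimes u\mapsto\Delta_{a,b}(x)(u\otimes 1_b)$ (also an isomorphism of left modules, with an analogous antipodal inverse), in which the right-hand side collapses via the companion identity to $\si_{a+b}\bp{\sum_k c''_{kk}}$ for the matrix $C''$ of $f$ in that presentation. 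The two free presentations differ by an invertible change of basis $U$ over $\Ho_{a+b}$, so $C''=UCU^{-1}$ and $\tr C''\equiv\tr C$ modulo $[\Ho_{a+b},\Ho_{a+b}]$, which \eqref{eq:mucyc} kills. This Hattori--Stallings step is where the two sides actually meet, and it is absent from your plan; your closing appeal to \eqref{eq:mucyc} gestures in the right direction but does not identify it. Note that the authors remark after the proposition that they know no proof avoiding modified traces, so if you complete the argument along these lines it is worth writing out in full.
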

\begin{proof}\label{P:si-ambi}
  The proof is based on the theory of modified trace: The guiding
  principle is that by \cite{BBG,Ha18} the symmetrized integral is the
  modified trace and we transpose the partial trace property of
  modified trace to a property for the symmetrized integral.

  In this proof we use the notation and result of
  \cite{Ha18}.  Let $\cat$ be the pivotal category of left $\Ho$-modules 
  (an object of $\cat$ is a finite dimensional $\K$-vector space
  equipped with a structure of left $\Ho_a$-module for some $a\in
  G$).
  For $a\in G$, the action of left multiplication on $\Ho_a$ makes $\Ho_a$ into  a projective object of  $\cat$.
  A \emph{cyclic trace} on
  the subcategory  of projective object $\Proj$ of $\cat$ is a family
  of linear function $\bcr{\mt_P:\End_\cat(P)\to\K}_{P\in\Proj}$
  satisfying $\mt_P(fg)=\mt_Q(gf)$ for any $f:Q\to P$ and $g:Q\to
  P$. By \cite[Proposition 2.4]{BBG} 
  and Definition \ref{D:si} (2),
  there is a unique cyclic trace $\mt$ such that for any $a\in G$ and
  $f\in\End_{\cat}(\Ho_a)$ we have $\mt_{\Ho_a}(f)=\si_a(f(1_a))$.
  Definition \ref{D:si} (1) implies that $\si$ is a right symmetrized
  $G$-integral, thus by \cite[Theorem 1.1]{Ha18}, $\mt$ is a right
  modified trace thus satisfies the right partial trace property which
  implies that for any $f\in\End_\cat(\Ho_{a}\otimes\Ho_{b})$,
  $$\mt_{\Ho_{a}\otimes\Ho_b}(f)= \mt_{\Ho_a}(\ptr^\cat_{\Ho_b}(f))$$
  where $\ptr^\cat_{\Ho_b}$ is the partial trace in $\cat$ which uses the pivotal
 structure of $\cat$.  Now Definition \ref{D:si} (3) implies that $\si$ is
  also a left symmetrized $G$-integral (i.e.
  $(\Ho,g)$ is unibalanced) thus $\mt$ satisfies the left
  partial trace property:
  $\mt_{\Ho_{a}\otimes\Ho_b}(f)=
  \mt_{\Ho_b}(\ptr^\cat_{\Ho_a}(f))$. Since in $\cat$ the left and right
  categorical partial trace are given by the vector space partial
  trace precomposed with the action of $g_a^{-1}$ and $g_b$
  respectively, combining the two equalities gives the desired 
  identity.
\end{proof}
\begin{rmq}
  Even if the grading is trivial and $\Ho$ is an ordinary unibalanced
  unimodular Hopf algebra, we do not know a proof  without using modified traces of Proposition
  \ref{P:si-ambi}  which shows an unexpected result about the integral. 
  \end{rmq}
\subsection{The modified symmetrized integral}
Let
$$\Delta_{a_1,...,a_n}:\Ho_{\sum_{i=1}^n
  a_i}\to\Ho_{a_1}\otimes\cdots\otimes\Ho_{a_n}$$ be the $(n-1)$-fold
coproduct.  For example,
$$\Delta_{a_1,a_2,a_3}=\left(\Delta_{a_1,a_2}\otimes \Id_{\Ho_{a_3}}\right)\circ \Delta_{a_1+a_2,a_3}=
  \bp{ \Id_{\Ho_{a_1}}\otimes \Delta_{a_2,a_3}}\circ
 \Delta_{a_1,a_2+a_3}.$$
 Let $C(\Ho_{a_1}\otimes\cdots\otimes\Ho_{a_n})$ be the subspace of $\otimes_{i=1}^n \Ho_{a_i}$ of all elements which commute with the image of $\Delta_{a_1,...,a_n}$, in other words all $x\in \otimes_{i=1}^n \Ho_{a_i}$ such that $$ x \Delta_{a_1,...,a_n}(y) = \Delta_{a_1,...,a_n}(y)x $$ for all $ y\in \Ho_{\sum_{i=1}^n a_i} $ (note here if $n=1$ then $\Delta_{a_1}=\Id_{\Ho_{a_1}}$). 
\begin{Lem}\label{L:equivariant}
  Recall $L_x: \Ho_a \to \Ho_a$ is the left multiplication by $x\in \Ho_a$. Then
  $$\si_{a_1}L_{g_{a_1}^{-1}}\otimes\Id(C(\Ho_{a_1}\otimes\cdots\otimes\Ho_{a_n}))
  \subset C(\Ho_{a_2}\otimes\cdots\otimes\Ho_{a_n})$$
  $$\text{and }
  \Id\otimes\si_{a_n}L_{g_{a_n}}(C(\Ho_{a_1}\otimes\cdots\otimes\Ho_{a_n}))
  \subset C(\Ho_{a_1}\otimes\cdots\otimes\Ho_{a_{n-1}})$$
\end{Lem}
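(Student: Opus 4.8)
The plan is to prove the first inclusion; the second is entirely symmetric. Throughout write $b:=a_2+\cdots+a_n$, $N:=\Ho_{a_2}\otimes\cdots\otimes\Ho_{a_n}$, $\Delta':=\Delta_{a_2,\dots,a_n}\colon\Ho_b\to N$, and abbreviate the operator under study by $\psi:=\si_{a_1}L_{g_{a_1}^{-1}}\otimes\Id\colon\Ho_{a_1}\otimes N\to N$; recall $\Delta_{a_1,\dots,a_n}=(\Id\otimes\Delta')\circ\Delta_{a_1,b}$. Since $\psi$ acts only on the first tensor factor and $1_{a_1}$ commutes there, for every $c\in N$ and $u\in\Ho_{a_1}\otimes N$ one has $\psi((1_{a_1}\otimes c)u)=c\,\psi(u)$ and $\psi(u(1_{a_1}\otimes c))=\psi(u)c$. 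Hence, taking $c=\Delta'(y)$, the inclusion $\psi(C(\Ho_{a_1}\otimes\cdots\otimes\Ho_{a_n}))\subset C(\Ho_{a_2}\otimes\cdots\otimes\Ho_{a_n})$ reduces to the single identity
$$\psi\big((1_{a_1}\otimes\Delta'(y))\,x\big)=\psi\big(x\,(1_{a_1}\otimes\Delta'(y))\big)\qquad(x\in C,\ y\in\Ho_b),$$
that is, to showing $\Delta'(y)\,\psi(x)=\psi(x)\,\Delta'(y)$.

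First I would record the ``cut'' formula produced by the integral. Because the pivot is $G$-grouplike and each $\Delta_{a,b}$ is an algebra map, $\Delta_{a_1,b}(g_{a_1+b}^{-1})=g_{a_1}^{-1}\otimes g_b^{-1}$; substituting $g_{a_1+b}^{-1}w$ for $x$ in \eqref{eq:rightsi} cancels the $g_b$ and yields
$$\textstyle\sum_{(w)}\si_{a_1}(g_{a_1}^{-1}w_{(1)})\,w_{(2)}=\si_{a_1+b}(g_{a_1+b}^{-1}w)\,1_b,$$
so that, after composing with $\Id\otimes\Delta'$, one gets $\psi\circ\Delta_{a_1,\dots,a_n}=\si_{a_1+b}(g_{a_1+b}^{-1}\,\cdot\,)\,1_N$. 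In other words $\lambda:=\si_{a_1}(g_{a_1}^{-1}\,\cdot\,)$ is a left integral of $\Ho_{a_1}$ which collapses any full coproduct to a scalar. The mirror statement for the last factor, carrying $g_{a_n}$ (with no inverse), follows the same way from the left-integral form of $\si$ guaranteed by \eqref{eq:muS} (unibalancedness), and is what is needed for the second inclusion.

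To create the needed identity I would inject a coproduct by the graded Hopf relation
$$1_{a_1}\otimes\Delta'(y)=\textstyle\sum_{(y)}(S_{-a_1}(y_{(1)})\otimes1_N)\,\Delta_{a_1,\dots,a_n}(y_{(2)}),\qquad \Delta_{-a_1,\,a_1+b}(y)=\textstyle\sum y_{(1)}\otimes y_{(2)},$$
which is checked by collapsing $\sum S(y_{(1)})\,y_{(2)}^{\,(1)}\otimes y_{(2)}^{\,(2)}$ through the antipode axiom, together with its partner carrying $S^{-1}$ on the right. Feeding this into $(1_{a_1}\otimes\Delta'(y))x$ and using the hypothesis $x\in C$ to replace $\Delta_{a_1,\dots,a_n}(y_{(2)})\,x$ by $x\,\Delta_{a_1,\dots,a_n}(y_{(2)})$, then applying $\psi$, expresses $\Delta'(y)\psi(x)$ as a sum over a threefold coproduct of $y$ in which the antipoded leg $S(y_{(1)})$ and the $\Ho_{a_1}$-leg of $\Delta_{a_1,b}(y_{(2)})$ meet inside $\si_{a_1}$. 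The same computation with the $S^{-1}$-partner expresses $\psi(x)\Delta'(y)$ symmetrically.

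The crux is to show these two expressions agree, and I expect this to be the main difficulty. The obstruction is precisely that, once commutativity has been used, the $\Ho_{a_1}$-component of $x$ sits between the two antipode-paired legs of $y$ inside $\si_{a_1}$, so the antipode axiom cannot be applied directly and the cyclicity \eqref{eq:mucyc} of $\si$ only rotates, never separates, these factors. I would overcome it with the graded ``integral-invariance'' identity $\sum_{(w)}\lambda(c\,w_{(1)})\,w_{(2)}=\sum_{(c)}\lambda(c_{(2)}w)\,S^{-1}(c_{(1)})$ — a consequence of the cut formula, \eqref{eq:mucyc} and the antipode axioms — which moves the offending $\Ho_{a_1}$-component across the integral at the cost of an antipode; the remaining coproduct legs of $y$ then collapse through the antipode axiom, leaving exactly $\Delta'(y)\psi(x)=\psi(x)\Delta'(y)$. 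Running the mirror of this argument with $g_{a_n}$ and the right-hand versions of the two Hopf relations gives the second inclusion.
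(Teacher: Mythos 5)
Your setup --- reducing the first inclusion to $\Delta'(y)\,\psi(x)=\psi(x)\,\Delta'(y)$, injecting a coproduct through the graded antipode axiom via $1_{a_1}\otimes\Delta'(y)=\sum S_{-a_1}(y_{(1)})\,y_{(2)}^{(1)}\otimes(\otimes_{i\ge2}y_{(2)}^{(i)})$, and then using $x\in C$ to commute $\Delta_{a_1,\dots,a_n}(y_{(2)})$ past $x$ --- is exactly the paper's route, and up to that point the argument is sound. The gap is in the finish. The ``obstruction'' you describe is not one. After commuting, you have $\sum\si_{a_1}\bigl(g_{a_1}^{-1}S_{-a_1}(y_{(1)})\,x_1\,y_{(2)}^{(1)}\bigr)\,(\otimes_{i\ge2}x_i)(\otimes_{i\ge2}y_{(2)}^{(i)})$. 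The missing step is the pivot axiom: since $S_{-a_1}S_{a_1}(u)=g_{a_1}ug_{a_1}^{-1}$, one gets $g_{a_1}^{-1}S_{-a_1}(y_{(1)})=S_{a_1}^{-1}(y_{(1)})\,g_{a_1}^{-1}$, and then cyclicity \eqref{eq:mucyc} rotates $S_{a_1}^{-1}(y_{(1)})$ to the far right, producing $\si_{a_1}\bigl(g_{a_1}^{-1}x_1\,y_{(2)}^{(1)}S_{a_1}^{-1}(y_{(1)})\bigr)$. The two antipode-paired legs are now adjacent \emph{in reversed order}, which is exactly the order collapsed by the $S^{-1}$-form of the antipode axiom (your own ``partner carrying $S^{-1}$ on the right''); they recombine into $1_{a_1}\otimes\Delta'(y)$ sitting to the right of $x$, giving $\psi(x)\Delta'(y)$. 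So cyclicity \emph{does} separate $x_1$ from the paired legs --- but only after the pivot has traded $S_{-a_1}$ for $S_{a_1}^{-1}$; this interplay of the pivot with \eqref{eq:mucyc} is the whole point of the hypotheses, and it is the step your write-up declares impossible.

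The substitute tool you reach for, the ``integral-invariance identity'' $\sum\lambda(cw_{(1)})w_{(2)}=\sum\lambda(c_{(2)}w)S^{-1}(c_{(1)})$, is neither established (its graded version needs care with which coproduct $\Delta_{?,?}$ is meant) nor obviously applicable: the element you want to move across the integral is $x_1$, a single tensor leg of $x\in C(\Ho_{a_1}\otimes\cdots\otimes\Ho_{a_n})$, and the identity would require a coproduct of $x_1$ (or of $g_{a_1}^{-1}S_{-a_1}(y_{(1)})x_1$), over which the hypothesis on $x$ gives no control. The preliminary ``cut formula'' is correct but is never actually used. As written the crux of the proof is missing; replacing your last paragraph by the pivot-plus-cyclicity step above completes the argument and recovers the paper's proof.
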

\begin{proof}
  We prove the first
  inclusion, the second is similar.  In this proof we use implied summation when using  coproducts.  Let $y\in \Ho_{\sum_{i=2}^n a_i}$ we use the following notation: 
$$\otimes_{i=0}^ny_i= \Delta_{-a_1,a_1, a_2,...,a_n}(y)
=\left((\Delta_{-a_1,a_1}\otimes \Id_{\otimes_{i=2}^n\Ho_{a_i}})\Delta_{0,a_2,...,a_n}\right)(y)$$
is an element of $\Ho_{-a_1}\otimes \Ho_{a_1}\otimes (\otimes_{i=2}^n\Ho_{a_i})$.  
Then the defining property of the  antipode imply
  $$1_{\Ho_{a_1}}\otimes (\Delta_{a_2,...,a_n}(y))=S_{-a_1}(y_{0})y_{1}\otimes(\otimes_{i=2}^ny_i)= y_{1}S^{-1}_{a_1}(y_{0})\otimes(\otimes_{i=2}^ny_i).$$
We use this equation in the first and last equalities of the following calculation:   
for $\otimes_{i=1}^nx_i\in C(\otimes_{i=1}^n\Ho_{a_i})$ with $x_i\in \Ho_{a_i}$ we have
 \begin{align*}
 \si_{a_1}(g^{-1}_{a_1}x_1)(\Delta_{a_2,...,a_n}(y)).(\otimes_{i=2}^nx_i)
 &=  \si_{a_1}(g_{a_1}^{-1}S_{-a_1}(y_{0})y_{1}x_1)(\otimes_{i=2}^ny_i).(\otimes_{i=2}^nx_i)\\
&=  \si_{a_1}(g_{a_1}^{-1}S_{-a_1}(y_{0})x_1y_{1})(\otimes_{i=2}^nx_i).(\otimes_{i=2}^ny_i)\\
&  =\si_{a_1}(S_{a_1}^{-1}(y_{0})g_{a_1}^{-1}x_1y_{1})(\otimes_{i=2}^nx_i).(\otimes_{i=2}^ny_i)\\
&  =\si_{a_1}(g_{a_1}^{-1}x_1y_{1}S_{a_1}^{-1}(y_{0}))(\otimes_{i=2}^nx_i).(\otimes_{i=2}^ny_i)\\
 & = \si_{a_1}(g_{a_1}^{-1}x_1)(\otimes_{i=2}^nx_i).(\Delta_{a_2,...,a_n}(y))
  \end{align*}
  where the second equality comes from the fact that $\otimes_{i=1}^nx_i$ is in $C(\otimes_{i=1}^n\Ho_{a_i})$, the third from the property of the pivot and the forth from the cyclic property of the integral.
\end{proof}
\begin{Def}\label{Def m-integral} 
Let $X\subset G$ and set $G'=G\setminus X$. A modified symmetrized integral on $G'$ is a family of $\K$-linear maps $$\si'=\{\si'_{a}:\ C(\Ho_a)\to\K\}_{a\in G'}$$ satisfying for any $a,b\in G'$ 
  $$\si_{a}L_{g_{a}^{-1}}\otimes\si'_{b}
  =\si'_{a}\otimes \si_{b}L_{g_{b}} \text{ on } C(\Ho_a\otimes \Ho_b).$$
\end{Def}

\begin{theo}
\label{T:ModSymIntExist}
  Assume $\K$ is an algebraically closed field.
  Let $\Ho_a$ be finite dimensional and semi-simple for all $a\in
  G'$. Then there exists a family $\qn{z_a\in C(\Ho_a)}_{a\in G'}$ such
  that
  $$\si_a(x)=\tr_{\Ho_a}^{\K}(L_{z_ax}) \text{ for all } x\in \Ho_a. $$
  Furthermore, there is a modified symmetrized integral on $G'$ defined by
  $$\si'_a(z)=\si_a(z_a z)=\tr_{\Ho_a}^{\K}(L_{z_a^2z}) \text{ for all } z\in C(\Ho_a).$$
\end{theo}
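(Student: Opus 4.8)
The plan is to establish the two assertions in turn: the existence of the central elements $z_a$ (a statement about a single semisimple algebra), and then the ambidexterity identity of Definition \ref{Def m-integral} for $\si'$.

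\textbf{Existence of $z_a$.} Since for $n=1$ one has $\Delta_{a}=\Id_{\Ho_a}$, the space $C(\Ho_a)$ is exactly the center $Z(\Ho_a)$. As $\Ho_a$ is finite dimensional and semisimple over the algebraically closed field $\K$, Artin--Wedderburn gives $\Ho_a\cong\prod_\alpha M_{n_\alpha}(\K)$, and the pairing $(x,y)\mapsto\tr^{\K}_{\Ho_a}(L_{xy})$ attached to the left regular representation is symmetric, associative and non-degenerate (as $\Ho_a$ is separable). By \eqref{eq:mucyc} the form $\si_a$ is a trace form, so it vanishes on $[\Ho_a,\Ho_a]$. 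The assignment $z\mapsto\tr^{\K}_{\Ho_a}(L_{z\,\cdot})$ sends $Z(\Ho_a)$ injectively (by non-degeneracy) into the space of trace forms, and both spaces have dimension equal to the number of simple factors; hence it is an isomorphism onto the trace forms, and there is a unique $z_a\in Z(\Ho_a)=C(\Ho_a)$ with $\si_a(x)=\tr^{\K}_{\Ho_a}(L_{z_a x})$. Concretely, if $\si_a$ has block components $c_\alpha$ times the matrix trace, then $z_a=\sum_\alpha (c_\alpha/n_\alpha)e_\alpha$, where $e_\alpha$ is the central idempotent of the $\alpha$-th factor.

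\textbf{Reduction of the ambidexterity identity.} Fix $a,b\in G'$ and $w\in C(\Ho_a\otimes\Ho_b)$. By Lemma \ref{L:equivariant}, $(\si_a L_{g_a^{-1}}\otimes\Id)(w)\in C(\Ho_b)$ and $(\Id\otimes\si_b L_{g_b})(w)\in C(\Ho_a)$, so both sides of the identity in Definition \ref{Def m-integral} are defined. Writing $w=\sum w'\otimes w''$ (implied summation) and using $\si'_c=\si_c(z_c\,\cdot)$ with $z_c$ central, the identity unwinds by linearity to
$$\sum \si_a(g_a^{-1}w')\,\si_b(z_b w'')=\sum \si_a(z_a w')\,\si_b(g_b w'').$$
Substituting $\si_c=\tr^{\K}_{\Ho_c}(L_{z_c}\,\cdot)$ and using $\sum L_{w'}\otimes L_{w''}=L_w$ on $\Ho_a\otimes\Ho_b$, each side collapses to a single trace: the left side equals $\tr^{\K}_{\Ho_a\otimes\Ho_b}(L_{Pw})$ with $P=z_a g_a^{-1}\otimes z_b^2$, and the right side equals $\tr^{\K}_{\Ho_a\otimes\Ho_b}(L_{Qw})$ with $Q=z_a^2\otimes z_b g_b$.

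\textbf{The heart of the argument.} The identity is an ambidexterity between the two gradings, which is precisely what Proposition \ref{P:si-ambi} governs. For any $w$, right multiplication $R_w$ is an endomorphism of $\Ho_a\otimes\Ho_b$ as an $\Ho_{a+b}$-module, so applying Proposition \ref{P:si-ambi} to $f=R_w$ gives
$$\sum \si_a(w')\,\tr^{\K}_{\Ho_b}(L_{g_b}R_{w''})=\sum \tr^{\K}_{\Ho_a}(L_{g_a^{-1}}R_{w'})\,\si_b(w'').$$
The obstacle is that the partially traced factor here carries a mixed left--right trace $\tr^{\K}(L_{g}R_{\bullet})$, whereas the definition of $\si'$ needs the pure integral. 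The key lemma that bridges this is: for each $c\in G'$ the linear form $y\mapsto\tr^{\K}_{\Ho_c}(L_{g_c}R_y)$ is again a trace form (because $L_{g_c}$ commutes with all right multiplications), hence a combination of the block traces; and feeding \emph{central} $w$ into Proposition \ref{P:si-ambi} for the various pairs $a,b\in G'$ forces, block by block in the Wedderburn decomposition, the equalities $\tr^{\K}_{\Ho_a}(L_{g_a^{-1}}R_\bullet)=\kappa\,\si_a$ and $\tr^{\K}_{\Ho_b}(L_{g_b}R_\bullet)=\kappa\,\si_b$ for one and the same scalar $\kappa$. Substituting these proportionalities into $\tr^{\K}_{\Ho_a\otimes\Ho_b}(L_{Pw})$ and $\tr^{\K}_{\Ho_a\otimes\Ho_b}(L_{Qw})$ then shows that both equal $\kappa\sum_{\alpha,\beta}\lambda_{\alpha\beta}c_\alpha^2 d_\beta^2$ (with $w=\sum_{\alpha,\beta}\lambda_{\alpha\beta}e_\alpha\otimes f_\beta$ and $c_\alpha,d_\beta$ the block coefficients of $\si_a,\si_b$), which proves the displayed identity and hence the theorem. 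I expect the grading-independence of the scalar $\kappa$ to be the only genuinely delicate point; everything else is bookkeeping once Proposition \ref{P:si-ambi} and the Wedderburn structure are in hand.
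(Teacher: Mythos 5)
Your first step (existence and centrality of $z_a$ via Wedderburn, non-degeneracy of the regular trace pairing, and the cyclicity \eqref{eq:mucyc}) is correct and is essentially the paper's argument. The second step, however, has a genuine gap, and it is located exactly where the theorem has content. You apply Proposition \ref{P:si-ambi} to $f=R_w$ (right multiplication), which is indeed an $\Ho_{a+b}$-module endomorphism, but the resulting identity involves the mixed forms $y\mapsto\tr^\K(L_{g_c}R_y)$, and your bridge back to $\si$ is the claimed proportionality $\tr^\K(L_{g_b}R_\bullet)=\kappa\,\si_b$, $\tr^\K(L_{g_a^{-1}}R_\bullet)=\kappa\,\si_a$. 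On a matrix block $M_n(\K)$ one has $\tr^\K(L_AR_B)=\tr(A)\tr(B)$, so the block coefficients of these mixed forms are the quantum dimensions $\tr(G_\alpha^{\pm1})$. In the motivating examples (see the Remark before Theorem \ref{T:HV'}) all quantum dimensions vanish, so $\kappa=0$ and your proportionality carries no information; worse, your final verification is carried out only for $w=\sum\lambda_{\alpha\beta}e_\alpha\otimes f_\beta$, i.e.\ for $w\in Z(\Ho_a)\otimes Z(\Ho_b)$. But $C(\Ho_a\otimes\Ho_b)$ is the commutant of $\Delta_{a,b}(\Ho_{a+b})$, which is strictly larger than the center of $\Ho_a\otimes\Ho_b$ in general, and for $w$ outside $Z\otimes Z$ the quantities $\si_a(g_a^{-1}w')$ do not reduce to $c_\alpha\tr(G_\alpha^{-1})$; when $\kappa=0$ both sides of your verified identity are $0$ on $Z\otimes Z$ while they are nonzero on the elements of $C$ that actually arise from tangle invariants. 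So the argument proves the identity only on a subspace where it is (in the relevant examples) trivially $0=0$.

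The repair is to apply Proposition \ref{P:si-ambi} to a different endomorphism: since $\tilde x=(z_a\otimes z_b)w$ lies in $C(\Ho_a\otimes\Ho_b)$, \emph{left} multiplication $L_{\tilde x}$ commutes with the $\Delta_{a,b}$-action and is therefore an $\Ho_{a+b}$-module endomorphism. Feeding $f=L_{\tilde x}$ into \eqref{eq:ambi}, the partial traces of $(\Id\otimes L_{g_b})L_{\tilde x}$ and $(L_{g_a^{-1}}\otimes\Id)L_{\tilde x}$ evaluated at the units collapse to ordinary traces of left multiplications, namely $\tr^\K_{\Ho_b}(L_{g_bz_bw''})=\si_b(g_bw'')$ and $\tr^\K_{\Ho_a}(L_{g_a^{-1}z_aw'})=\si_a(g_a^{-1}w')$ by the first part (using centrality of $z_a,z_b$), and the remaining tensor factors give $\si_a(z_aw')=\si'_a(w')$ and $\si_b(z_bw'')=\si'_b(w'')$. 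The identity of Definition \ref{Def m-integral} then drops out for every $w\in C(\Ho_a\otimes\Ho_b)$ with no Wedderburn bookkeeping and no hypothesis on quantum dimensions. This is the paper's route; your choice of $R_w$ is the one step that needs to be replaced.
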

\begin{proof}
 Since $\K$ is an algebraically closed field then for each $a\in G'$ the algebra $\Ho_a$ is
  a product of matrix algebra.
Since any linear form on $M_n(\K)$ can be realized by $M \mapsto \tr^\K_{M_n(\K)}(L_{N}L_{M})$ for a unique $N\in M_n(\K)$ we have 
$\si_a$ is equal to $x\mapsto\tr^\K_{\Ho_a}(L_{z_ax})$ for some
  unique $z_a\in\Ho_a$.  Property (2) of Definition \ref{D:si} implies that
  $z_a\in C(\Ho_a)$. 

  For the second assertion we need to show that $\si'_a $ is a modified symmetrized integral on $G'$.   Let $a,b\in G'$ and
  $x=x'\otimes x''\in C(\Ho_a\otimes \Ho_b)$ (summation symbol
  omitted). Then let
  $\tilde x=(z_a\otimes z_b).x\in C(\Ho_a\otimes \Ho_b)$ so that
  $L_{\tilde x}$ is an endomorphism of the $\Ho_{a+b}$-module
  $\Ho_a\otimes \Ho_b$ as in Proposition \ref{P:si-ambi}.  Applying
  the proposition, we get following equivalences of equalities:
    \begin{align*}
      \si_{b}(\ptr^\K_{\Ho_{a}}\bp{(L_{g_{a}^{-1}}\otimes \Id)L_{\tilde x}}(1_{b}))
      &=
        \si_{a}(\ptr^\K_{\Ho_{b}}\bp{(\Id\otimes L_{g_{b}})L_{\tilde x}}(1_{a}))
        \\
     \si_b(\ptr^\K_{\Ho_{a}}(L_{g_{a}^{-1}z_ax'})z_bx'')
          &  =\si_a(z_ax'\ptr^\K_{\Ho_{b}}(L_{g_{b}z_bx''}))\\
      \tr^\K_{\Ho_{a}}(L_{g_{a}^{-1}z_ax'})\si_b(z_bx'')
           & =\si_a(z_ax')\tr^\K_{\Ho_{b}}(L_{g_{b}z_bx''})\\
      \si_{a}L_{g_{a}^{-1}}\otimes\si'_{b}(x)
      &=\si'_{a}\otimes \si_{b}L_{g_{b}}(x)
    \end{align*}
    and the proposition follows.  
\end{proof}
\begin{rmq}\label{R:z_a}
  The central elements $z_a$ are determined via a basis $\qn{z_i}$ of
the center $C(\Ho_a)$ in which $z_i$ is the element corresponding to
the identity matrix of $i\ith$-factor in the decomposition
$\Ho_a \cong \prod_i \End(V_i)$, $V_i \simeq \K^{n_i}$ are irreducible
representations of $\Ho_a$. Remark that
$\tr^\K_{\Ho_{b}}(L_{z_i})=\dim_\K(\End(V_i))=n_i^2$.  In particular, if $\K$ has
characteristic $0$ then $z_a=\sum_i\frac{d_i}{n_i} z_i$ where $d_i$ is an
element of $\K$ (note $d_i$ can be interpreted as the modified dimension of
$V_i$, see \cite{Ha18}). 
Note $\si_a(z_i)=n_id_i$ then $\si'_a(z_i)=d_i^2$ and
$\si'_a(1)=\si_a(z_a)=\sum_i d_i^2$.
\end{rmq}

\subsection{The modified invariant} 
For this section let $\Ho$ be a ribbon Hopf $G$-coalgebra with a
twist non-degenerate symmetrized integral and a modified symmetrized
integral $\mu'=(\mu'_a)_{a\in G'}$.

An {\em admissible} $G$-manifold is a $G$-manifold with the requirement
that the cohomology class $\omega\in H^1(M,G)\simeq\Hom(H_1(M),G)$ has
a value in  $G'$ i.e. $\Im( \omega) \cap G'\neq \varnothing$.  A surgery
presentation $L$ of a $G$-manifold is called {\em computable} if there
exists 
a component $L_i$ of $L$ such that $\omega(m_i)\in G'$ where $m_i$ is a meridian of $L_i$. 
\begin{Lem}
Let $M$ be an admissible $G$-manifold. There exists a computable surgery presentation of $M$.
\end{Lem}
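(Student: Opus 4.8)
The plan is to start from an arbitrary surgery presentation and enlarge it by a cancelling pair of components, one of which carries the desired color. Since $M$ is admissible there is a class $x\in H_1(M)$ with $\omega(x)\in G'$, and because $\dim M=3$ I can represent $x$ by an embedded oriented circle $\gamma\subset M$. Fix any surgery presentation $L=L_1\cup\dots\cup L_n$ of $M$, so that $M=S^3_L$ and the complement $S^3\setminus N(L)$ sits inside $M$. A general position argument ($\gamma$ and the cores of the surgery solid tori are disjoint $1$-complexes in a $3$-manifold) lets me isotope $\gamma$ off the surgery tori, i.e.\ into $S^3\setminus N(L)$, without changing $[\gamma]=x\in H_1(M)$. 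Thus $\gamma$ becomes a knot in $S^3$ lying in the complement of $L$, whose class is $x=\sum_i \lk(\gamma,L_i)\,m_i$, where $m_i$ is the meridian of $L_i$ (the meridians generate $H_1(M)$ and the class of a curve in a link complement is read off from its linking numbers), so that $\omega(x)=\sum_i \lk(\gamma,L_i)\,\omega(m_i)$.

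Next I would turn $\gamma$ into a genuine surgery component without altering the manifold. For this, adjoin to $L$ the knot $\gamma$ with framing $+1$ together with a small $0$-framed meridian $c$ of $\gamma$ (so $c$ bounds a disc meeting $\gamma$ once and is unlinked from $L$), obtaining $L'=L\cup\gamma\cup c$. The pair $(\gamma,c)$ is a cancelling pair: a slam-dunk of $\gamma$ through the disc bounded by its meridian $c$ removes $\gamma$ and turns $c$ into an isolated unknot with framing $0-1/1=-1$, which is then blown down by a Kirby I move. Hence $S^3_{L'}\cong S^3_L=M$, and under this homeomorphism the components $L_i$ and their meridians are untouched, so $\omega$ and the classes $m_i\in H_1(M)$ are unchanged; thus $L'$ is again a surgery presentation of $(M,\omega)$.

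It then remains to identify the color of the new component $c$, namely $\omega(m_c)$ for $m_c$ the meridian of $c$. Writing the surgery relations of $L'$ in $H_1(M)$ (each framed component contributes the relation that its framing curve is null-homologous), the component $c$ gives $m_\gamma=0$ since $\lk(c,\gamma)=1$ and $\lk(c,L_i)=0$, and then the component $\gamma$ gives $m_c=-\sum_i \lk(\gamma,L_i)\,m_i=-x$. Consequently $\omega(m_c)=-\omega(x)$; reversing the orientation of $c$ replaces $m_c$ by $-m_c$ and yields $\omega(m_c)=\omega(x)\in G'$. Therefore $c$ is a component of $L'$ whose color lies in $G'$, so $L'$ is a computable surgery presentation of $M$.

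The general-position isotopy and the homological bookkeeping are routine; the step that must be handled with care is verifying that adjoining $(\gamma,c)$ does not change the homeomorphism type of $M$. I expect this cancelling-pair argument (slam-dunk followed by a blow-down) to be the main point, together with the orientation choice needed to land the new color exactly in $G'$ rather than in $-G'$.
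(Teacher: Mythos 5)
Your construction reaches the right conclusion and is essentially a geometric repackaging of the paper's argument, but implemented differently. The paper works entirely inside the Kirby calculus: it notes that $\Im(\omega)$ is generated by the meridian colors, picks colors $a_1,\dots,a_k$ with $\sum_j a_j\in G'$, creates a Hopf link with a $0$-framed and a $+1$-framed component by two Kirby I moves and a slide, and then slides the chosen strands over the $0$-framed component so that the new component acquires the color $\sum_j a_j$. You instead realize an element $x$ with $\omega(x)\in G'$ by an embedded curve $\gamma$ in the link complement (so that $x=\sum_i\lk(\gamma,L_i)m_i$) and adjoin the cancelling pair $(\gamma,c)$; the two are equivalent, since sliding the strands $a_{i_j}$ through the $0$-framed unknot is exactly how one builds, by Kirby moves, a new component whose linking numbers with $L$ realize the coefficients of $x$. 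What the paper's version buys is that the whole construction is manifestly a sequence of Kirby I and II moves --- this is reused verbatim in the proof of Theorem \ref{T:HV'}, where one needs the stabilization to be achieved by \emph{admissible} moves; your version only establishes the homeomorphism $S^3_{L'}\cong S^3_L$, which suffices for this lemma but not for that later argument.

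One step of your justification is stated incorrectly, though the fact you need is true. You cannot ``slam-dunk $\gamma$ through the disc bounded by $c$'': the slam-dunk deletes a component that is a \emph{meridian} of the other (a small unknot bounding a disc meeting the other component once and disjoint from the rest of the link), and $\gamma$ is not a meridian of $c$ --- it links all of $L$. The correct move is the reverse: $c$ \emph{is} a $0$-framed meridian of $\gamma$, and since $\gamma$ carries an integer coefficient $+1$, dunking $c$ into the surgery torus of $\gamma$ deletes $c$ and changes the coefficient of $\gamma$ to $1-\tfrac{1}{0}=\infty$, i.e.\ deletes $\gamma$ as well; this is the standard cancelling-pair fact and gives $S^3_{L'}\cong S^3_L$ as you want. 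Your homological bookkeeping ($m_\gamma=0$, $m_c=-x$) and the orientation reversal of $c$ to land the color in $G'$ rather than $-G'$ are correct and necessary, since $G'=G\setminus X$ need not be symmetric under negation.
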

\begin{proof}
  Consider any link presentation $L$ of $M$. Since $\Im(\omega)\subset G$ is
  generated by its values on the meridians of $L$, there exists a sequence
  $a_1,\ldots a_k\in G$ such that $\sum_{j=1}^k a_j\in G'$ with $a_j=\omega(m_{i_j})$ where $m_{i}$ is a
  meridian of the $i^{th}$-component  of $L$.  
  We will show that a series of Kirby moves can be applied to $L$ to obtain the desired computable presentation as follows.  Isotope $L$ so that all the strands with the colors $ a_{j}$ are in a small ball as in the left side of the diagram in Equation \eqref{Fig:Stab}.  Now do two Kirby I moves to create both a $+1$ and $-1$ framed $0$-colored unknot.  Sliding one of these unknots over the other we obtain a Hopf link where one of the components is zero framed.  Sliding the strands colored with the $ a_{j}$'s over this zero framed component we obtain the right side of Equation \eqref{Fig:Stab}: 
   \begin{equation}\label{Fig:Stab}
    \epsh{fig4b}{10ex}\put(-62,-2){\ms{a_1}}\put(-50,-2){\ms{a_2}}\put(-13,-2){\ms{a_k}}
  \longrightarrow
  \epsh{fig4-1a}{10ex}\put(-120,-10){\ms{0}}\put(-6,-25){\ms{\sum_ja_j}}.
  \end{equation}
\end{proof}
 
Let $L$ be a $G$-link with $n$ components.  A $k$-string $G$-link $T$
with $\ell$ closed component is a $(k,k)$-pure framed oriented tangle
with $k+\ell$ ordered components colored by elements of
$G$. Its closure $\hat T$ is a $G$-link in $S^3$ with $k+\ell$
components obtained by taking the braid closure (which is well defined
up to isotopy).  We define as in Section \ref{ss:univ-inv} a
universal invariant $J(T)$ by multiplying the beads along components.  The proof of the  following proposition follows in the same way as the proof of Theorem \ref{T:univ-inv}.
\begin{Prop} Let $T$ as above, let $H^T_i=\Ho_{a_i}$ if the i\textsuperscript{th}
  component of $T$ is open and $H^T_i=\text{H\!H}_0(\Ho_{a_i})$ if the
  i\textsuperscript{th} component of $T$ is closed.  The element
  $J(T)\in\bigotimes_{i=1}^{n}{H^T_i}$ is an isotopy invariant of $T$.
  Furthermore, let $cl:\Ho_{a_i}\to\text{H\!H}_0(\Ho_{a_i})$ be the map
  that send $x$ to the class of $g_{a_i}x$ then we have
  $$cl^{\otimes k}(J(T))=J(\hat T)$$
\end{Prop}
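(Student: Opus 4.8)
The plan is to treat the two assertions separately: the isotopy invariance of $J(T)$, and the closure formula $cl^{\otimes k}(J(T))=J(\hat T)$. For the first I would argue exactly as in the proof of Theorem~\ref{T:univ-inv}. An isotopy of the framed oriented tangle $T$ fixing its $2k$ boundary points is realized by planar isotopies together with Reidemeister~II, III moves and the Turaev move~\eqref{E:TureavDig1}, and each of these was already shown in Theorem~\ref{T:univ-inv} to preserve the product of beads along every strand: Reidemeister~II and III from the quasitriangularity relations \eqref{E:QuStrImplies1}--\eqref{E:QuStrImplies3}, the Turaev move from the ribbon condition of Definition~\ref{D:RibbonStructure}, and the bead manipulations \eqref{eq:mult-beads}--\eqref{eq:cap-beads} for cups and caps. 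The only genuine difference from the link case is the basepoint: on a closed component there is no canonical first bead, so one passes to $\text{H\!H}_0(\Ho_{a_i})$ to kill the commutator ambiguity coming from the choice of marked point, whereas on an open component the two endpoints are fixed throughout the isotopy, so reading the beads from the bottom endpoint to the top endpoint gives a canonical element of $\Ho_{a_i}$ and no quotient is needed. This establishes that $J(T)\in\bigotimes_i H^T_i$ is well defined and isotopy invariant.

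\textbf{Reduction of the closure formula.} For the second assertion I would first reduce to a single strand. Beads on distinct components are multiplied independently, the braid (trace) closure joins the top and bottom endpoints of each open strand without interacting with the others, and it leaves the $\ell$ already-closed components of $T$ (whose $J$-values lie in $\text{H\!H}_0$ already) untouched. Hence it suffices to check, for each $i\in\{1,\ldots,k\}$, that closing the $i$th open strand sends its contribution $x_i\in\Ho_{a_i}$ to $cl(x_i)\in\text{H\!H}_0(\Ho_{a_i})$, and then to tensor over the $k$ open strands.

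\textbf{The single-strand computation.} Fix an open strand carrying the element $x=x_i$, read from its bottom endpoint. The closure attaches a return arc: a cap at the top, where the upward main strand turns into the downward return strand, and a cup at the bottom, where the return strand turns back up to the marked point. Matching these orientations against the four cup/cap bead conventions in the defining figures, the cup carries the bead $1_{a_i}$ while the cap, where the orientation of the strand reverses, carries the pivotal bead $g_{a_i}$, and the return arc itself carries no bead from $T$. Reading the now-closed component around from the marked point thus multiplies $x$ by a single factor $g_{a_i}$, and since the component is closed one records its image in $\text{H\!H}_0(\Ho_{a_i})$; by cyclicity of $\text{H\!H}_0(\Ho_{a_i})$ the side on which $g_{a_i}$ is inserted is irrelevant, so this image equals the class of $g_{a_i}x$, namely $cl(x)$. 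Tensoring over the open strands (and leaving the closed factors fixed) yields $cl^{\otimes k}(J(T))=J(\hat T)$.

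\textbf{Main obstacle.} The delicate point is the bookkeeping at the closure arc: one must verify, by carefully matching the orientations produced by the braid closure against the four cup/cap bead conventions, that the return arc contributes \emph{exactly one} pivotal factor $g_{a_i}$ and not $g_{a_i}^{-1}$, $g_{a_i}^{\pm 2}$, or none. This is precisely what forces the definition $cl(x)=[g_{a_i}x]$ and makes the quantum-trace character of the closure explicit. A secondary point to state carefully is the well-definedness of the bead reading on open strands, where the fixed endpoints supply the canonical basepoint that replaces the $\text{H\!H}_0$ quotient used on closed components.
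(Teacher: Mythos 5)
Your proposal is correct and follows essentially the same route as the paper, which simply states that the proof proceeds as in Theorem~\ref{T:univ-inv}; your isotopy-invariance argument is exactly that reduction, and your single-strand computation of the closure (one pivotal bead $g_{a_i}$ from the cap/cup of the return arc, with cyclicity in $\text{H\!H}_0(\Ho_{a_i})$ absorbing the placement ambiguity) correctly supplies the detail the paper leaves implicit in the formula $cl(x)=[g_{a_i}x]$.
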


Suppose $(M,\omega)$ is an $G$-manifold with surgery
presentation $L$. Let $T_j$ be a $1$-string $G$-link whose closure is $L$.  Then
the universal invariant of $T_j$ is given by
$$ J\bp{T_j,\omega}\in \bigotimes_{i=1}^{j-1}\text{H\!H}_0(\Ho_{a_i})\otimes
\Ho_{a_j}\otimes \bigotimes_{i=j+1}^n \text{H\!H}_0(\Ho_{a_i}).$$
\begin{Lem}\label{central}
  We have
  $$\bp{\bigotimes_{i=1}^{j-1}\si_{a_i}\otimes \Id_{\Ho_{a_j}}\otimes
  \bigotimes_{i=j+1}^n\si_{a_i}}\bp{J\bp{T_j,\omega}}\in
C(\Ho_{a_j}).$$
\end{Lem}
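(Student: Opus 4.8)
The plan is to prove the following stronger statement and then integrate down: a suitable lift of $J(T_j,\omega)$ to the honest tensor product $\bigotimes_{i=1}^n\Ho_{a_i}$ lies in the centralizer $C(\Ho_{a_1}\otimes\cdots\otimes\Ho_{a_n})$, after which all factors $i\neq j$ are removed by repeated use of Lemma \ref{L:equivariant}. First I would choose a marked point on each closed component, so that reading the beads along the $i$\textsuperscript{th} component produces a genuine element $w_i\in\Ho_{a_i}$ rather than merely its class in $\text{H\!H}_0(\Ho_{a_i})$; this produces a lift $\tilde J\in\bigotimes_{i=1}^n\Ho_{a_i}$ whose $j$\textsuperscript{th} tensor slot is the reading of the open strand and whose other slots are the $w_i$.

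The heart of the argument is to show $\tilde J\in C(\Ho_{a_1}\otimes\cdots\otimes\Ho_{a_n})$, i.e.\ that $\tilde J$ commutes with $\Delta_{a_1,\dots,a_n}(y)$ for every $y\in\Ho_{\sum_i a_i}$. I would establish this by a meridian-sliding argument: the element $\Delta_{a_1,\dots,a_n}(y)$ is realized by inserting a single bead $y$ (colored by $\sum_i a_i$) on an auxiliary arc that crosses all $n$ strands and is cabled into them, so that iterating Proposition \ref{P:doubling} replaces $y$ by $\Delta_{a_1,\dots,a_n}(y)$ distributed over the strands. Sliding this arc from below the whole tangle $T_j$ to above it is an isotopy, so comparing the evaluation with the bead at the bottom to the evaluation with the bead at the top yields the commutation $\tilde J\,\Delta_{a_1,\dots,a_n}(y)=\Delta_{a_1,\dots,a_n}(y)\,\tilde J$. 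Making this rigorous means pushing the distributed bead past each crossing, using the quasitriangularity relations \eqref{E:DeltaR12} and \eqref{E:DeltaR23}, and past each cup and cap, using that the pivot is $G$-grouplike (so the grouplike beads commute with the distributed $y$ in the required way). This is where I expect the real work to lie.

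Finally I would reconcile the operation in the statement with Lemma \ref{L:equivariant}: applying $\si_{a_i}$ to the $\text{H\!H}_0(\Ho_{a_i})$ factor of $J(T_j,\omega)$ amounts, after accounting for the grouplike factors (which enter both through the pivotal structure around the closed loops and through the operators $L_{g_{a_i}^{\pm1}}$ of Lemma \ref{L:equivariant}) and using the cyclicity \eqref{eq:mucyc} of $\si_{a_i}$, to applying one of the maps $\si_{a_i}L_{g_{a_i}^{-1}}$ or $\si_{a_i}L_{g_{a_i}}$ to $\tilde J$. I would then peel off the factors $i=1,\dots,j-1$ from the left via $\si_{a_i}L_{g_{a_i}^{-1}}\otimes\Id$ and the factors $i=n,n-1,\dots,j+1$ from the right via $\Id\otimes\si_{a_i}L_{g_{a_i}}$, Lemma \ref{L:equivariant} guaranteeing at each step that we stay in the centralizer of the remaining tensor factors. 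Removing every factor but the $j$\textsuperscript{th} lands us in $C(\Ho_{a_j})$, which for a single factor is exactly the center of $\Ho_{a_j}$, as claimed. The main obstacle is the centralizer membership of $\tilde J$ in the second paragraph, together with the careful bookkeeping of grouplike and R-matrix beads as the bead is slid through the diagram; I note that the naive alternative of sliding a bead around the closed-up $j$\textsuperscript{th} strand only yields cyclic equivalence in $\text{H\!H}_0(\Ho_{a_j})$, not the sharper centrality needed here.
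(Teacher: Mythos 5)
Your proof follows essentially the same route as the paper: pass to the fully open $n$-string tangle $T$, show that $J(T)\in C(\Ho_{a_1}\otimes\cdots\otimes\Ho_{a_n})$ (the paper simply cites \cite[Theorem 5.1]{GHP20} for this step, whereas you sketch the meridian-cabling argument behind it), and then peel off the $n-1$ closed factors by repeated application of Lemma \ref{L:equivariant}. The bookkeeping of grouplike beads under closure that you flag is exactly the content of the paper's statement $cl^{\otimes k}(J(T))=J(\hat T)$, so your argument is correct and matches the paper's.
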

\begin{proof}
  Consider a $n$-string $G$-link $T$ whose braid closure of the $n-1$
  right-most strands produces $T_j$.  Then one can prove (see for
  example \cite[Theorem 5.1]{GHP20}) that
  $J(T)\in C(\bigotimes_{i=1}^{n}\Ho_{a_i})$.  Then applying $n-1$
  times Lemma \ref{L:equivariant} we have the above property.
\end{proof}
\begin{rmq}
  Applying $\si_{a_i}$ to $g_{a_i}$ times the element of the previous
  lemma produces $\delta^s\HV(M,\omega)$.  Suppose now that for
  $a\in G'$, $H_a$ is semi-simple and all its irreducible
  representations have zero quantum dimension (where the quantum
  dimension is defined as the trace on the action of $g_a$).  Then it
  follows that $g_aC(H_a)$ is in the kernel of $\si_a$.  In this
  situation, we get that $\HV(M,\omega)=0$ for any admissible
  $G$-manifold $(M,\omega)$.  This motivate the renormalized invariant
  of Theorem \ref{T:HV'}.
\end{rmq}

Now let $(M,\omega)$ be an admissible $G$-manifold with computable
surgery presentation $L$.  Suppose the coloring of the $j^{th}$
component of $L$ is in $G'$ then set
$$\HV'\bp{M,\omega}= \delta^{-s}\bigotimes_{i=1}^{j-1}\si_{a_i}\otimes
\si'_{a_j}\otimes \bigotimes_{i=j+1}^n\si_{a_i}\bp{J\bp{T_j,\omega}}.$$
\begin{theo}\label{T:HV'}
  $\HV'\bp{M,\omega}$ is an invariant of the admissible $G$-manifold $(M,\omega)$.
\end{theo}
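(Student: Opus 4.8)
The plan is to mirror the proof of Theorem~\ref{T:InvGman}, replacing on the distinguished component the evaluation $\si_{a_j}L_{g_{a_j}}$ (which there produces $\HV$) by $\si'_{a_j}$, and to control the two genuinely new issues: that $\HV'$ does not depend on which $G'$-colored component carries $\si'$, and that the Kirby moves can be arranged so as never to disturb that component. Throughout I write $P_j=\bp{\bigotimes_{i\neq j}\si_{a_i}\otimes\Id}(J(T_j,\omega))\in C(\Ho_{a_j})$, which lies in the centre by Lemma~\ref{central}, so that $\HV'(M,\omega)=\delta^{-s}\si'_{a_j}(P_j)$ while, by the Remark following Lemma~\ref{central}, $\delta^{-s}\si_{a_j}(g_{a_j}P_j)=\HV(M,\omega)$.

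First I would establish independence of the distinguished component, which is the conceptual core. Fix two $G'$-colored components $j$ and $k$, and consider the $2$-string $G$-link $T_{j,k}$ whose closure is $L$, with the $j$th string placed at the far left and the $k$th at the far right. Evaluating $\si_{a_i}$ on all the other, closed, components and iterating Lemma~\ref{L:equivariant} produces an element $Q\in C(\Ho_{a_j}\otimes\Ho_{a_k})$. Closing the right-hand string with the pivotal twist dictated by the right form of Lemma~\ref{L:equivariant} gives $P_j=(\Id\otimes\si_{a_k}L_{g_{a_k}})(Q)$, while closing the left-hand string (left form) gives $P_k=(\si_{a_j}L_{g_{a_j}^{-1}}\otimes\Id)(Q)$; the two $g^{\pm1}$ twists are exactly those appearing in Definition~\ref{Def m-integral}. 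Hence
\[
\si'_{a_j}(P_j)=\bp{\si'_{a_j}\otimes\si_{a_k}L_{g_{a_k}}}(Q)=\bp{\si_{a_j}L_{g_{a_j}^{-1}}\otimes\si'_{a_k}}(Q)=\si'_{a_k}(P_k),
\]
the middle equality being precisely the defining relation of a modified symmetrized integral (Definition~\ref{Def m-integral}) applied to $Q\in C(\Ho_{a_j}\otimes\Ho_{a_k})$. This is the one step where the modified-integral axiom enters.

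With well-definedness in hand I would run through the generating moves between surgery presentations as in Theorem~\ref{T:InvGman}, each time using the previous paragraph to place $\si'$ on a $G'$-colored component \emph{not} disturbed by the move. Isotopy is the isotopy-invariance of $J$. Orientation reversal of an undistinguished component is Proposition~\ref{P:antipode} together with $\si_{-a}\circ S_a=\si_a$ (Definition~\ref{D:si}(3)) on the $\si$-evaluated factor. A Kirby~I move adds a $0$-colored $\pm1$-framed unknot, which is $\si$-evaluated and, exactly as in Theorem~\ref{T:InvGman}, multiplies the bracket by $\si_0((g_0\theta_0)^{\pm1})=\delta^{\pm1}$, cancelling the change of $\delta^{-s}$, while the distinguished $G'$-component persists. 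For a Kirby~II move I would use the closing Remark of the proof of Theorem~\ref{T:InvGman}: invariance only requires the slid component (colored $a+b$, becoming $a$) to be $\si$-evaluated, so it suffices to keep $\si'$ on a component that is at most slid \emph{over}; the verification is then Proposition~\ref{P:doubling} together with property~\eqref{eq:rightsi} of $\si$, verbatim.

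The main obstacle is therefore not any single move but guaranteeing, at every step of a sequence of moves connecting two computable presentations, a $G'$-colored component in such a ``protected'' position (never reversed, never the slider), since a priori two computable presentations are only joined by a sequence passing through possibly non-computable diagrams. I would arrange this by the device already used in the existence lemma for computable presentations: stabilise and slide to introduce an auxiliary component of color $\sum_j a_j\in G'$, and carry it along the standard connecting sequence. At each elementary move at most one component is reversed and at most one is the slider, so (after stabilising by enough such auxiliary components if necessary) there is always a $G'$-colored component untouched by the current move, on which the second paragraph lets us evaluate $\si'$; each move then preserves the bracket by the computations above. Finally, component-independence identifies the scalar obtained at each end with $\HV'$ computed on any chosen $G'$-component of $L$ and of $L'$, giving $\HV'(L)=\HV'(L')$. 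Making this bookkeeping precise—so that the relocations of $\si'$ and the $\si$-evaluations of the undisturbed part remain compatible through the whole sequence—is the technical heart of the argument.
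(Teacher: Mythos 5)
Your proposal is correct and follows essentially the same route as the paper: independence of the distinguished $G'$-component via the two-string link, Lemma~\ref{L:equivariant} and the defining relation of Definition~\ref{Def m-integral}; invariance under ``admissible'' Kirby moves exactly as in Theorem~\ref{T:InvGman}; and the auxiliary Hopf-link stabilization of Equation~\eqref{Fig:Stab} to carry a protected $G'$-colored component (a single one suffices, since the original sequence of moves never touches it) through an arbitrary sequence of moves. Your write-up in fact makes explicit the marked-point-independence computation that the paper only sketches.
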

\begin{proof}
  Given a computable $G$-link $L$ with one marked point on one
  component colored by an element of $G'$, consider the complement of
  a small 3-ball centered at the marked point.  It is diffeomorphic to
  a $1$-string $G$-link whose closure is $L$.  Let denote by $\HV'(L)$
  the image of this $1$-string $G$-link by the map of Lemma
  \ref{central}.  The scalar $\HV'(L)$ only depends of the isotopy
  class of $L$.  Property of $\mu'$ implies that moving the marked
  point from one $G'$-colored component to an other does not change
  $\HV'(L)$.  The proof of Theorem \ref{T:InvGman} implies the scalar
  $\HV'(L)$ is also invariant by Kirby I moves between computable
  links.  Moreover, it is also invariant by any Kirby II moves when
  one slides any strand of a component that does not have the marked
  point.  We call such Kirby II moves admissible.
  
  Let $L^0,L^N$ be two computable $G$-link with marked points $m_0$ and $m_N$, respectively.  Then $L^0$ and $L^N$ are related by a sequence of
  ambient isotopies and Kirby I and II moves which correspond to a sequence of
  $G$-links $L^1,L^2,\cdots,L^{N-1}$.  Here it is possible that these Kirby moves are not admissible (two problems can happen: no $G'$ color or one of the moves could slide over the strand with the marked point).  However, we will show this sequence can be used to  produce a new sequence that only contains admissible Kirby
  moves.   For this we perform a series of admissible Kirby moves similar to the moves depicted  in Equation \eqref{Fig:Stab}:  close to a strand of $L^0$ colored with an element of  $G'$  do two Kirby I moves and a Kirby II move to create a Hopf link with a zero and a $+1$ framed component.  Slide the $G'$ colored strand of $L^0$ over the zero framed component to create an  link with a new $G'$ colored unknot (as in the right side of Equation \eqref{Fig:Stab}).  Move the marked point $m_0$ to this unknot to create a new admissible link presentation contains $L^0$ as a sublink.  Now we can  
   perform the
  same Kirby moves ignoring the newly created Hopf link to get a
  sequence of admissible Kirby moves
  $\wb{L^0}\to\wb{L^1},\cdots,\wb{L^{N}}$.   By construction $\wb{L^N}$ is the link $L^N$ with a Hopf link stabilization as depicted in the right side of Equation \eqref{Fig:Stab} with a marked point on the zero framed component of the Hopf link.  Move this marked point to the marked point $m_N$ on the sublink $L^N$.
  Preform the reverse admissible Kirby moves to remove the Hopf link of $\wb{L^N}$ and obtain $L^N$.  
  Since all moves are admissible we have that $\HV'(L^0)=\HV'(L^N)$.
\end{proof}
\section{Examples from quantum groups}\label{s:ex}
Here we show the unrolled quantum group associated to $\slt$ gives
rise to ribbon Hopf coalgebra with a modified symmetrized
integral.  
We first describe a Hopf algebra that embed into the topological
unrolled quantum group $\wh{U_\xi^H}\slt$ of \cite{GHP20}.  \\
Let $\ell\geq 3$ be an integer, let $\ell'=\ell/\gcd(\ell,2)$
and
$\xi=\e^{\frac{2\pi i}{\ell}}$. We use the notations
$\xi^x=\exp(2i\pi x/\ell),\ \qn{x}=\xi^x-\xi^{-x} $ and
$\qn{n}!=\qn{n}\qn{n-1}...\qn{1}$.

Let $\UK$ be the $\C$-algebra with generators $E$, $F$,
$K^\alpha$ where $\alpha\in\C$ with relations: for any
$\alpha,\beta\in\C,$
\begin{align}
  K^\alpha K^\beta&=K^{\alpha+\beta}, & K^\alpha EK^{-\alpha}&=\xi^{2\alpha} E, &
   K^\alpha FK^{-\alpha}&=\xi^{-2\alpha}F, \notag\\
  K^0&=1, & [E,F]&=\frac{K-K^{-1}}{\xi-\xi^{-1}}, & E^{\ell'}=&0=F^{\ell'}\notag.
\end{align}
The algebra $\UK$ is a Hopf algebra where the coproduct, counit and
antipode are defined by
\begin{align*}
  \Delta(E)&= 1\otimes E + E\otimes K, 
  &\ve(E)&= 0, 
  &S(E)&=-EK^{-1}, 
  \\
  \Delta(F)&=K^{-1} \otimes F + F\otimes 1,  
  &\ve(F)&=0,& S(F)&=-KF,
    \\
  \Delta(K^\alpha)&=K^\alpha\otimes K^\alpha
  &\ve(K^\alpha)&=1,
  & S(K^\alpha)&=K^{-\alpha}.
\end{align*}
The Hopf algebra $\UK$ is pivotal with pivot $g=K^{1-\ell'}$.  Let
$G=\C/2\Z$.  For ${a}\in G$, set
$\U_{a}=\UK/(K^{\ell/2}-\xi^{\ell a/2})$.  The Hopf algebra
structure of $\UK$ induces a structure of pivotal Hopf $G$-coalgebra
on the family $\U=\qn{\U_{ a}}_{a\in G}$. We denote the
coproduct, counit and antipode of $\U$ by
$\Delta = \qn{\Delta_{ a,  b}}_{ a,  b\in G},\
\varepsilon,$ and $S=\qn{S_{{a}}}_{ a\in G}$, respectively.

In \cite{Oh93}, Ohtsuki proved a version of the following theorem for a colored Hopf algebra closely related to $\U$. 
\begin{theo}\label{ribbon structure} The Hopf $G$-coalgebra $\U$ is ribbon with $R$-matrix
  \begin{align*}
    &\RR_{ a, b}= \HH_{ a, b}\check{\RR}_{ a, b}\in\U_{ a}\otimes\U_{ b},\\
    \text{with }&\check{\RR}_{ a, b}=\sum_{n=0}^{\ell'-1}\frac{\qn{1}^{2n}}
                 {\qn{n}!}\xi^{\frac{n(n-1)}{2}} E^n\otimes F^n\in\U_{ a}\otimes\U_{ b}\\
    \text{and where }&\HH_{a,b}=\frac{1}{\ell'}
    \sum_{m_1, m_2=0}^{\ell'-1} \xi^{-2(m_1+\beta)(m_2+\alpha)}
    K^{\beta+m_1}\otimes K^{\alpha+m_2}\in\U_{ a}\otimes\U_{ b}
  \end{align*}
  does not depends of $\alpha,\beta\in\C$ congruent to $\frac a2,\frac b2$ respectively modulo $\Z$.
  \end{theo}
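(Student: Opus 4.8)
The plan is to verify, in order, the three things the statement asserts: that $\RR_{a,b}$ is a well-defined element of $\U_a\otimes\U_b$ independent of the lifts $\alpha\equiv\tfrac a2$, $\beta\equiv\tfrac b2\pmod\Z$; that the family $\RR=\{\RR_{a,b}\}$ satisfies the three quasitriangularity axioms \eqref{E:Rtau}, \eqref{E:DeltaR12}, \eqref{E:DeltaR23} of Definition \ref{Def:quasitriangular}; and that the twist identity \eqref{E:twist} of Definition \ref{D:RibbonStructure} holds for the pivot $g=K^{1-\ell'}$ (pivotality having already been recorded). Because $\U$ is a graded quotient of Ohtsuki's colored Hopf algebra and embeds into the completed unrolled quantum group of \cite{GHP20}, each identity is the graded avatar of a classical statement for quantum $\slt$; the genuinely new ingredient is the Cartan factor $\HH_{a,b}$, a finite Gauss sum replacing the topological element $\xi^{H\otimes H/2}$, so that is where the work concentrates.

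First I would settle well-definedness. Since the phase $\xi^{-2(m_1+\beta)(m_2+\alpha)}$ depends only on $P=m_1+\beta$, replacing $\beta$ by $\beta+1$ and shifting the summation index (and symmetrically for $\alpha$) reduces the claim to the single boundary identity between the would-be $P=\beta+\ell'$ term and the $P=\beta$ term. This follows from the defining relation $K^{\ell/2}=\xi^{\ell a/2}$ of $\U_a$, which makes $K^{\ell'}$ the scalar $\xi^{\ell a/2}$ or $\xi^{\ell a}$, together with the periodicity forced by $\ell'=\ell/\gcd(\ell,2)$, which trivializes the offending phase $\xi^{-2\ell'(\,\cdot\,)}$ on the relevant exponents; the scalar and the phase cancel exactly. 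Hence $\HH_{a,b}$, and therefore $\RR_{a,b}$, is independent of the chosen representatives.

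The key lemma is that $\HH_{a,b}$ is a discrete Fourier kernel: on a pair of weight vectors $v_\lambda\otimes w_\nu$, with $Kv_\lambda=\xi^{2\lambda}v_\lambda$ and $Kw_\nu=\xi^{2\nu}w_\nu$ (so that $\lambda\equiv\alpha$ and $\nu\equiv\beta\pmod\Z$ by the relation $K^{\ell/2}=\xi^{\ell a/2}$), the element $\HH_{a,b}$ acts as the scalar $\xi^{2\lambda\nu}$. Substituting the eigenvalues, the double sum factors and the inner sum over $m_1$ is a geometric series whose common ratio $w$ satisfies $w^{\ell'}=1$; it therefore equals $\ell'$ on the single residue class where $w=1$ (the one matching $\lambda$) and vanishes otherwise, cancelling the prefactor $1/\ell'$ and leaving exactly $\xi^{2\lambda\nu}$ after a short manipulation using $\nu\equiv\beta$ and $\ell=\gcd(\ell,2)\ell'$. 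With this characterization all Cartan contributions to the axioms reduce to additivity of weights under the coproduct: for instance the $\HH$-part of \eqref{E:DeltaR12} becomes $\xi^{2\lambda(\nu+\nu')}=\xi^{2\lambda\nu}\xi^{2\lambda\nu'}$, while \eqref{E:Rtau} applied to the grouplike elements $K^\gamma$ holds because every term of $\RR_{a,b}$ commutes with $K^\gamma\otimes K^\gamma$.

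It then remains to assemble the full axioms and the twist. For the nilpotent factor $\check\RR_{a,b}$ the computations are the standard quantum-$\slt$ identities governed by the $q$-binomial theorem, and the cross terms between $\HH$ and $\check\RR$ are controlled by the commutations $K^\gamma E=\xi^{2\gamma}EK^\gamma$ and $K^\gamma F=\xi^{-2\gamma}FK^\gamma$; combining these with the scalar action of $\HH$ yields \eqref{E:Rtau}, \eqref{E:DeltaR12} and \eqref{E:DeltaR23}, now carrying the grading. Finally I would expand $\theta_a=m_a(\tau((g_a\otimes1_a)\RR_{a,a}))$ and $m_a((1_a\otimes g_a^{-1})\RR_{a,a})$ with $g_a=K^{1-\ell'}$: using the Fourier-kernel property and the $E^n\leftrightarrow F^n$ symmetry of $\check\RR_{a,a}$, both collapse to the same element exactly as in the ungraded ribbon structure, giving \eqref{E:twist}. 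The main obstacle I anticipate is the Gauss-sum lemma together with the bookkeeping of the half-integer shifts $\alpha,\beta$ throughout the hexagon identities; moreover, since $\U_a$ fails to be semisimple for some $a$, one cannot in general deduce the element identities from their action on simple modules, so the Cartan computations must either be carried out directly inside $\U_a\otimes\U_b$ or transported from the dense semisimple locus by a continuity argument in $a,b\in\C/2\Z$.
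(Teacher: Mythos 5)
Your strategy is a direct verification, which is genuinely different from the paper's proof: the paper checks none of the axioms by hand. Instead it embeds $\UK$ into the topological unrolled quantum group $\wh{\cU_\xi^H}\slt$ of \cite{GHP20} via $K^\alpha\mapsto\xi^{2\alpha H}$, uses a discrete Fourier transform of the $\ell'$-periodic function $\xi^{2(H-\alpha)\otimes(H-\beta)}$ to identify the image of the topological Cartan factor $\xi^{2H\otimes H}$ in the quotient with the finite Gauss sum $\HH_{a,b}$, and then inherits the quasitriangularity axioms, the twist identity \eqref{E:twist}, and the independence of $\alpha,\beta$ all at once from the already-established ribbon structure upstairs. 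A hands-on proof along your lines is possible in principle (Ohtsuki carries out essentially such a computation in \cite{Oh93}), and your well-definedness argument and your treatment of the nilpotent factor $\check\RR_{a,b}$ are fine in outline.

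However, there is a genuine gap at the centre of your argument, and you half-acknowledge it without repairing it. Your key lemma characterizes $\HH_{a,b}$ only by its scalar action $\xi^{2\lambda\nu}$ on weight vectors of modules, and you then ``reduce all Cartan contributions to additivity of weights.'' That establishes \eqref{E:Rtau}, \eqref{E:DeltaR12}, \eqref{E:DeltaR23} and \eqref{E:twist} only after evaluation on modules; to get identities between elements of $\U_a\otimes\U_b$ you need a faithful family of weight modules, which you do not produce. Of the two repairs you propose, the ``continuity argument in $a,b\in\C/2\Z$'' is not meaningful as stated, since the $\U_a$ are distinct quotients of $\UK$ and no flat family or topology is set up in which an identity proved for generic $a$ would specialize. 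The other repair, computing directly inside $\U_a\otimes\U_b$, does work but amounts to replacing your key lemma with a different one: one must prove the algebra-level identities $\HH_{a,b}(E\otimes 1)=(E\otimes K)\HH_{a,b}$, $\HH_{a,b}(1\otimes F)=(K^{-1}\otimes F)\HH_{a,b}$ (and their mates), each obtained by reindexing the Gauss sum using the same periodicity as in your well-definedness step, together with the coproduct identities of the form $(\Id\otimes\Delta_{b,c})\HH_{a,b+c}=(\HH_{a,c})_{1b3}(\HH_{a,b})_{12c}$. These element-level identities are the actual content of the theorem in a direct approach, and your proposal neither states nor proves them, so the central step is missing.
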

 \begin{proof}
In this proof we use the  topological ribbon Hopf algebra $\wh{\cU_\xi^H}\slt$
defined in \cite[Examples 2.3 and 2.7 and Section 3.2]{GHP20}.  For $a\in\C/2\Z$, let 
$I_a=(\xi^{\ell H}-\xi^{\ell a/2})\wh{\cU_\xi^H}\slt$ the ideal of $\wh{\cU_\xi^H}\slt$.  The 
assignment $E\mapsto E$, $F \mapsto F$ and
$K^\alpha\mapsto\xi^{2\alpha H}$ extends to an embedding of the Hopf
algebra $\UK$ into $\wh{\cU_\xi^H}\slt$ which induces an
embedding of (topological) Hopf $G$-coalgebra $\{\U_a\to\wh{\cU_\xi^H}\slt/I_a\}_{a\in G}$.  We will use this
embedding to define a ribbon structure on $\U$ as follows.
 
The $R$-matrix element of $\wh{\cU_\xi^H}\slt$ is given by
$$\RR=\xi^{{2}{H\otimes H}}\sum_{n=0}^{\ell'-1}
\frac{\qn{1}^{2n}}{\qn{n}!}\xi^{\frac{n(n-1)}{2}} E^n\otimes F^n\in\wh{\cU_\xi^H}\slt\wh \otimes \,\wh{\cU_\xi^H}\slt.$$
We get the $R$-matrix of the Hopf $G$-coalgebra $\U$ by
applying the discrete Fourier transform in \cite[Proposition
4.6]{Ha18a} as follow:
Let $a, b\in\C/2\Z$ and $\alpha$, $\beta\in\C$ such that
$a\equiv 2\alpha, b\equiv2\beta$ modulo $2\Z$.  The function
$\xi^{2(H-\alpha)\otimes (H-\beta)}$ is $\ell'$-periodic on
$( a, b)=(\alpha,\beta)+\mathbb{Z}^2$ thus it can be expressed as
a polynomial in $K\otimes1=\xi^{2H\otimes1}$ and $1\otimes K=\xi^{1\otimes 2H}$.
$$\mathcal{F}_{( a,  b)}\bp{\xi^{2(H- \alpha)\otimes (H- \beta)}}=\sum_{m_1, m_2=0}^{\ell'-1}c_{m_1m_2}K^{m_1}\otimes K^{m_2},$$
where 
\begin{align*}
  c_{m_1m_2}&=\frac{1}{\ell'^2}
              \sum_{i_1,i_2=0}^{\ell'-1}\xi^{-2m_1(\alpha+i_1)-2m_2(\beta+i_2)}\xi^{2i_1i_2}\\
            &=\frac{1}{\ell'^2}\xi^{-2m_1\alpha-2m_2\beta}
              \sum_{i_1=0}^{\ell'-1}\xi^{-2m_1i_1}\sum_{i_2=0}^{\ell'-1}\xi^{2(i_1-m_2)i_2}\\
            &=\frac{1}{\ell'^2}\xi^{-2m_1\alpha-2m_2\beta}
              \sum_{i_1=0}^{\ell'-1}\xi^{-2m_1i_1}\ell'\delta_{0, i_1-m_2}\\
            &=\frac{1}{\ell'}\xi^{-2m_1\alpha-2m_2\beta}\xi^{-2{m_1m_2}}.
\end{align*}
The third equality holds as $\xi^2$ is a primitive $\ell'$ root of unity.
Let
$$\HH_{a,b}=\frac{1}{\ell'} \sum_{m_1, m_2=0}^{\ell'-1} \xi^{-2(m_1+\beta)(m_2+\alpha)}
K^{\beta+m_1}\otimes K^{\alpha+m_2}\in\U_a\otimes\U_b,$$ then (see
\cite[Proposition 4.3]{GHP20}),
$\xi^{2H\otimes H}=
\HH_{a,b}\in\wh{\cU_\xi^H}\slt/I_a\wh\otimes\,\wh{\cU_\xi^H}\slt/I_b$.
Hence, the image of $\RR$ in
$\wh{\cU_\xi^H}\slt/I_a \wh\otimes\, \wh{\cU_\xi^H}\slt/I_b$ belongs
to $\U_{a}\otimes \U_{b}$ and is equal to $\RR_{a,b}$.

Since $\RR$ is the universal $R$-matrix of the topological ribbon Hopf
algebra $\wh{\cU_\xi^H}$ with pivot $K^{1-\ell'}$, the properties for
$(\RR_{a,b})_{a,b\in G}$ defined in Definition
\ref{Def:quasitriangular}
and Equation
\eqref{E:twist} are direct transposition of the properties of $\RR$.
 \end{proof}

Now we consider a finite type sub Hopf $G$-coalgebra of $\U$: For
$ a\in\C/{2\Z}$, let $\Uo_{ a}$ be the subalgebra of
$\U_{ a}$ generated by $E$,$F$ and $K$.  By the PBW theorem this
algebra is finite with dimension ${\ell'}^3$ and has a basis $\{E^{n_E}F^{n_F}K^{n_K}\}_{0\le n_E,n_F,n_K<\ell'}$. The family
$\Uo=\{\Uo_{ a}\}_{ a\in G}$ forms a finite type unimodular pivotal
sub Hopf $G$-coalgebra of $\U$  (which is not ribbon), see \cite{Ha18}.  Its symmetrized integral $\sio$ is given in the PBW basis by sending all
vectors to $0$ except $\sio_{ a}(E^{\ell'-1}F^{\ell'-1})=\eta$ for
some arbitrary non zero constant independent of $ a\in G$.

By Theorem \ref{T:ModSymIntExist}, there exists a unique
$z_{ a}\in C(\Uo_{ a})$ such that
$\sio_{ a}(x)=\tr^\C_{\U_{ a}}(L_{z_{ a}x})$ and there is a
modified symmetrized integral ${\sio}'$ given for
${ a}\in G\setminus\{\wb0,\wb1\}$ by
${\sio}'_{ a}(x)=\tr^\C_{\U_{ a}}(L_{z_{ a}^2x})=\sio_a(z_{ a}x)$.

\begin{theo}\label{T:USymmInt}
  The pivotal Hopf $G$-coalgebra $\bp{\U,g}$ has a symmetrized integral $\si$ given by
  \begin{equation}
    \label{eq:symint}
    \si_{ a}:\left|
      \begin{array}{l}E^{n_E}F^{n_F}K^{n_K}\mapsto
        0\text{ if }
        \begin{array}[t]{l}
          {n_E}\neq \ell'-1\text{ or }{n_F}\neq \ell'-1\\
        \text{ or }{n_K}\in \C\setminus\frac\ell2\Z,
        \end{array}
        \\
        E^{\ell'-1}F^{\ell'-1}K^{n\frac\ell2}\mapsto \xi^{n\frac\ell2 a}\eta,
      \end{array}\right. 
  \end{equation}
  for an arbitrary non zero constant $\eta$.
  Furthermore $\U$ has a modified symmetrized integral on
  $G'=\C/2\Z\setminus\{\wb0,\wb1\}$ given by $\si'_{ a}=\si_{ a}\circ L_{z_{ a}}$.
\end{theo}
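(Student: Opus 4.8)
The plan is to check the three axioms \eqref{eq:rightsi}--\eqref{eq:muS} of Definition \ref{D:si} for the explicit family $\si$, and then to build $\si'$ from Theorem \ref{T:ModSymIntExist}. The organizing remark is that $\si$ is concentrated on the finite part $\Uo$: as $K^{n\ell/2}=\xi^{n\ell a/2}$ is a scalar in $\U_a$, any basis vector on which \eqref{eq:symint} is nonzero already lies in $\Uo_a$, where $\si_a$ coincides with the given integral $\sio_a$ of \cite{Ha18}. To exploit this I grade $\U_a$ by the class of the $K$-exponent in $\C/H$ with $H=\langle 1,\tfrac\ell2\rangle$ (so $H=\Z$ for $\ell$ even and $H=\tfrac12\Z$ for $\ell$ odd). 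The relations $K^\alpha K^\beta=K^{\alpha+\beta}$, $[E,F]=\tfrac{K-K^{-1}}{\xi-\xi^{-1}}$ and $K^{\ell/2}=\xi^{\ell a/2}$ are homogeneous, so this is an algebra grading; its degree-$0$ part is exactly $\Uo_a$ (for $\ell$ odd one uses that $2$ is invertible mod $\ell$, whence $K^{1/2}=\xi^{-\ell a/2}K^{(\ell+1)/2}\in\Uo_a$), and $\si_a$ is supported in degree $0$.

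After checking that \eqref{eq:symint} is consistent with $K^{\ell/2}=\xi^{\ell a/2}$, the coproduct and antipode axioms become almost formal. Since $E,F,K^{\pm1}$ have degree $0$ and $\Delta(K^\alpha)=K^\alpha\otimes K^\alpha$, the coproduct is diagonal, $\Delta_{a,b}\bp{\U_{a+b}[\gamma]}\subset\U_a[\gamma]\otimes\U_b[\gamma]$; as $\si_a$ kills every $\gamma\neq0$ summand, \eqref{eq:rightsi} collapses onto degree $0$, i.e.\ onto the corresponding identity for $\sio$ on $\Uo$. The generating instance $x=E^{\ell'-1}F^{\ell'-1}K^{n\ell/2}$ illustrates the mechanism: the only term of $\Delta(E)^{\ell'-1}\Delta(F)^{\ell'-1}$ with left leg $E^{\ell'-1}F^{\ell'-1}$ is $E^{\ell'-1}F^{\ell'-1}\otimes K^{\ell'-1}$, so applying $\si_a$ on the left and $g_b=K^{1-\ell'}$ on the right reduces the second leg to $K^{n\ell/2}=\xi^{n\ell b/2}1_b$ and yields $\xi^{n\ell(a+b)/2}\eta\,1_b=\si_{a+b}(x)1_b$. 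Likewise $S$ sends degree $\gamma$ to $-\gamma$ and $E^{\ell'-1}F^{\ell'-1}K^{n\ell/2}$ to $E^{\ell'-1}F^{\ell'-1}K^{-n\ell/2}$ modulo lower $E,F$-degree (the signs and $q$-powers from $S(E)=-EK^{-1}$, $S(F)=-KF$ cancel), so \eqref{eq:muS} again reduces to the known identity for $\sio$.

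For the modified integral, fix $a\in G'=\C/2\Z\setminus\{\wb0,\wb1\}$; there $\Uo_a$ is semisimple, so Theorem \ref{T:ModSymIntExist} supplies $z_a\in C(\Uo_a)$ with $\sio_a(x)=\tr^\C_{\Uo_a}(L_{z_a x})$ and the finite-type modified integral $\sio'_a=\sio_a\circ L_{z_a}$. First $z_a\in C(\U_a)$: being central in $\Uo_a$ it has weight $0$, and a weight-$0$ element commutes with every $K^\alpha$, so $\si'_a=\si_a\circ L_{z_a}$ is defined on all of $C(\U_a)$. To verify the identity of Definition \ref{Def m-integral} on $C(\U_a\otimes\U_b)$ I use the $H\times H$-grading: the centralizer of a graded subspace is graded and $\Delta_{a,b}$ is diagonal, so the degree-$(0,0)$ part $x_{0,0}$ of any $x\in C(\U_a\otimes\U_b)$ lies in $C(\Uo_a\otimes\Uo_b)$; and since $g_a,g_b,z_a,z_b$ and $\si_a,\si_b$ all sit in degree $0$, only $x_{0,0}$ feeds the four evaluations. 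The required equality $\si_aL_{g_a^{-1}}\otimes\si'_b=\si'_a\otimes\si_bL_{g_b}$ is then precisely the finite-type statement for $\Uo$ proved via the ambidexterity Proposition \ref{P:si-ambi} inside Theorem \ref{T:ModSymIntExist}.

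The one axiom not governed by the coproduct --- and hence the place I expect the real difficulty --- is cyclicity \eqref{eq:mucyc}. Passing to the associated graded for the $E,F$-degree filtration (where $E,F$ commute and $K^\alpha$ merely $q$-commutes), two monomials $x,y$ give top coefficients differing by $\xi^{2(a_1-b_1)(\alpha_1+\alpha_2)}$, which is $1$ exactly because $\alpha_1+\alpha_2\in\tfrac\ell2\Z$ forces the exponent into $\ell\Z$. The genuine obstacle is that when the $E$- or $F$-degree of $xy$ exceeds $\ell'-1$ the relations $E^{\ell'}=F^{\ell'}=0$ let the lower-order part of $[E,F]$ feed back into the top coefficient, so a bare-hands proof needs an induction on the PBW degree. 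I expect the cleanest route to be conceptual: identify $\si$ with the cyclic modified trace on unrolled quantum $\slt$ from \cite{Ha18}, transported through the embedding $\UK\hookrightarrow\wh{\cU_\xi^H}\slt$ used in Theorem \ref{ribbon structure}; this delivers \eqref{eq:mucyc} at once and is also the natural setting in which to confirm twist non-degeneracy.
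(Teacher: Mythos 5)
Your overall strategy coincides with the paper's: decompose $\U_a$ into the finite part $\Uo_a$ plus the fractional-$K$-power part, observe that $\si_a=\sio_a\circ\pi_a$ is supported on $\Uo_a$, note that $\Delta$ and $S$ respect this decomposition, and thereby reduce axioms \eqref{eq:rightsi} and \eqref{eq:muS} to the known properties of $\sio$ on the finite-type subcoalgebra. Your treatment of the modified integral is also the paper's (both sides of Definition \ref{Def m-integral} vanish on the fractional part because $z_a$ and $g_a^{\pm1}$ preserve it, and on the degree-$(0,0)$ part one invokes the finite-type case from Theorem \ref{T:ModSymIntExist}); your explicit remark that the degree-$(0,0)$ component of an element of $C(\U_a\otimes\U_b)$ lies in $C(\Uo_a\otimes\Uo_b)$ is a point the paper leaves implicit, and is worth making.

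The genuine gap is cyclicity \eqref{eq:mucyc}, which you correctly flag as the hard axiom but do not prove. Your associated-graded computation does not close (as you admit, the relations $E^{\ell'}=F^{\ell'}=0$ feed lower-order terms of $[E,F]$ back into the top coefficient), and the fallback you propose is not available: the cyclic modified trace of \cite{Ha18} lives on the finite-dimensional $\Uo_a$ and gives only the cyclicity of $\sio$, whereas what must be shown is cyclicity of $\si_a$ on the infinite-dimensional $\U_a$ --- extending beyond the finite-type setting is exactly the content of the theorem, so appealing to that identification is circular. The paper closes this with a short two-grading argument that avoids any PBW induction: grade $\U_a$ both by weight (adjoint $K$-action, with $E,F,K^\alpha$ of weights $2,-2,0$) and by the $K$-exponent modulo $\Z$. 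Since $\si_a$ kills homogeneous elements of nonzero weight and of nonzero $K$-degree, one may assume $x\in K^\alpha\Uo_a$ and $y\in K^{-\alpha}\Uo_a$ with opposite weights $w_y=-w_x$ (otherwise both $\si_a(xy)$ and $\si_a(yx)$ vanish). Then $xy$ has weight zero, hence commutes with $K^\alpha$, so $xy=(K^{-\alpha}x)(yK^{\alpha})$ with both factors in $\Uo_a$, and
$\si_a(xy)=\sio_a\bigl((K^{-\alpha}x)(yK^{\alpha})\bigr)=\sio_a\bigl((yK^{\alpha})(K^{-\alpha}x)\bigr)=\si_a(yx)$
by cyclicity of $\sio$. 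Conjugation by $K^\alpha$ simply transfers the fractional power from $x$ to $y$; this is the idea your write-up is missing.
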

\begin{proof}
  For $ a\in\C/{2\Z}$, let
  $M_{ a}=\bigoplus_{\alpha\in ]0,1[+i\R}K^\alpha\Uo_{ a}$.
  Then $\U_{ a}=\Uo_{ a}\oplus M_{ a}$ with
  $S_{ a}(M_{ a})\subset M_{- a}$ and
  $\Delta_{ a, b}(M_{ a+ b})\subset M_{ a}\otimes M_{ b}$.
  Let $\pi_{ a}:\U_{ a}\to\Uo_{ a}$ be the projection with kernel $M_{ a}$ and
  define $\si_{ a}=\sio_{ a}\circ \pi_{ a}$.  Then for any
  $x=h+m\in\U_{ a+ b}=\Uo_{ a+ b}\oplus M_{ a+ b}$,
  \begin{align*}
    \si_{ a}\otimes g_{ b}(\Delta_{ a, b} x)&=\si_{ a}\otimes g_{ b}(\Delta_{ a, b} h)+\si_{ a}\otimes g_{ b}(\Delta_{ a, b} m)\\
  &=\sio_{ a}\otimes g_{ b}(\Delta_{ a, b} h)=\sio_{ a+ b}(h)1_{ b}=\si_{ a+ b}(x)1_{ b}
  \end{align*}
  where the second equality comes from
  $\pi_{ a}\otimes\Id(\Delta_{ a, b} m)=0$.  This proves point (1) of
  Definition \ref{D:si}.  Point (3) follows for $x=h+m\in\U_{ a}$ from
  $$\si_{- a}(S_{ a}(x))=\sio_{- a}(S_{ a}(h))+\si_{- a}(S_{
    a}(m))=\sio_{ a}(h)+0=\si_{ a}(x).$$  Finally we take advantage of
  the $\Z$-grading in $\U$ by the weights where the weights of $E$,
  $F$ and $K^\alpha$ are respectively $2,-2$ and $0$.  Remark that $\si$
  vanishes on homogeneous elements of non zero weight. Let
  $\alpha\in [0,1[+i\R$ and $\beta\in ]-1,0]+i\R$ and fix
  $x\in K^\alpha\U_{ a}$ and $y\in K^\beta\U_{ a}$ with homogeneous
  weights $w_x$ and $w_y$ respectively. Then
  $\si_{ a}(xy)=\si_{ a}(yx)=0$ unless $\beta=-\alpha$ and $w_y=-w_x$.
  In this last case $K^\alpha$ commutes with $xy$ so using that
  $K^{-\alpha}x,yK^\alpha\in\Uo_{ a}$ we have
  $$\si_{ a}(xy)=\sio_{ a}((K^{-\alpha}x)(yK^\alpha))=\sio_{
    a}((yK^\alpha) (K^{-\alpha}x))=\si_{ a}(yx).$$ 
  
  We now check that the family
  $\qn{\si'_{ a}}_{ a\in G\setminus \qn{\wb0, \wb1}}$ satisfies
  Definition \ref{Def m-integral}.
  Since $z_aM_a\subset M_a$ and $g_a^{\pm1}M_a\subset M_a$, for
  $x\in \U_{ a}, y\in \U_{ b}$ with $x\in M_{ a}$ or $y\in M_{ b}$ then we have
  $\si_{ a}L_{g_{ a}^{-1}}\otimes \si'_{ b}(x\otimes y)=0=\si'_{
    a}\otimes \si_{ b}L_{g_{ b}}(x\otimes y)$.  On the other hand, if
  $x\in \Uo_{ a}, y\in \Uo_{ b}$ then
\begin{align*}\si_{ a}L_{g_{ a}^{-1}}\otimes \si'_{ b}(x\otimes y)
&=\sio_{ a} L_{g_{ a}^{-1}}\otimes \sio'_{ b}(x\otimes y)\\
&=\sio'_{ a}\otimes \sio_{ b}L_{g_{ b}}(x\otimes y)\\
&=\si'_{ a}\otimes \si_{ b}L_{g_{ b}}(x\otimes y).
\end{align*}
\end{proof}

As in the proof of Theorem \ref{ribbon structure}, the discrete Fourier transform on the twist of $\wh{\cU_\xi^H}$
$$\theta=\sum_{n=0}^{\ell'-1}\xi^{\frac{n^2+3n}2}\frac{{\qn1}^{2n}}{\qn n!}\xi^{2H^2}K^{\ell'-1-n}E^nF^n$$
and on its inverse
$$\theta^{-1}=\sum_{n=0}^{\ell'-1}(-1)^n\xi^{-\frac{n^2+3n}2}\frac{{\qn1}^{2n}}{\qn n!}\xi^{-2H^2}K^{n+1-\ell'}E^nF^n$$
gives a formula for the twists $\theta_0^{\pm}$ in $\U_{0}$ such that 
$$\si_0(g\theta_0)=\wb{\si_0(g^{-1}\theta_0^{-1})}=\lambda\frac{(-1)^{\ell+1}\xi^{-1}}{{\ell'}^2}{\qn1}^{2\ell'-2}\sum_{k=0}^{\ell'-1}\xi^{2k^2+2k}$$
where we assume the normalization factor $\lambda$ is in $\R^*$.  Finally, the last Gauss sum in the previous equation is known to vanish if and only if $\ell\in8\Z$. Thus, the Hopf $\C/2\Z$-coalgebra $\U$ lead to an
invariant of $\C/2\Z$-manifolds $\HV$ and a modified invariant $\HV'$
of admissible $\C/2\Z$-manifolds.

We finish with a simple example which shows that $\HV'$ is a non-trivial renormalization of $\HV$.
\begin{Prop} Assume $\ell\in2\Z\setminus8\Z$. 
  Let $M=S^2\times S^1$ and
  $\omega$ be a cohomology class on $M$ which takes value $a\in\C/2\Z$
  on a generator of $H_1(M)$.  Then $\HV(M,\omega)=0$.  However, if $a\notin\{\wb0,\wb1\}$ then
  $$\HV'(M,\omega)=\left\{
    \begin{array}{cl}
      -2\frac{\qn1^{4\ell'-4}}{\qn{\ell'a}^2}\eta^2&\text{if }\ell>4\\
      2\frac{\qn a^2}{\qn{2a}^2}\eta^2&\text{if }\ell=4 \; .
    \end{array} \right.$$ 
\end{Prop}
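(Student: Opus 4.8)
The plan is to evaluate both invariants on the standard surgery presentation of $M=S^2\times S^1$ as $0$-framed surgery on the unknot $U$. The single component of $U$ is colored by $a$, the value of $\omega$ on the meridian of $U$ (a generator of $H_1(M)\cong\Z$), and the linking matrix is the $1\times1$ zero matrix, so $s=0$ and $\delta^{-s}=1$: both $\HV$ and $\HV'$ are therefore pure integral evaluations. The $1$-string tangle $T$ whose closure is $U$ is the trivial strand, so $J(T)=1_a$; closing it gives $J(U,\omega)=[g_a]\in\text{H\!H}_0(\U_a)$, while leaving it open gives the element $1_a\in\U_a$ to which $\si'_a$ is applied.

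For the first assertion, $\HV(M,\omega)=\si_a([g_a])=\si_a(K^{1-\ell'})$. Since $g_a=K^{1-\ell'}$ has $E$-degree $0\neq\ell'-1$, the explicit formula \eqref{eq:symint} of Theorem \ref{T:USymmInt} gives $\si_a(K^{1-\ell'})=0$; equivalently, all simple $\U_a$-modules have vanishing quantum dimension (the trace of $g_a$ is a geometric sum over a full period of $\xi^{-2}$), so by the remark following Lemma \ref{central} one has $g_aC(\U_a)\subset\ker\si_a$. Either way $\HV(M,\omega)=0$, for every $a$.

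For $\HV'$, since $a\in G'$ the unknot is already computable and $\HV'(M,\omega)=\si'_a(1_a)=\sio_a(z_a)$, which by Remark \ref{R:z_a} equals $\sum_i\mathsf d_i^{\,2}$, the sum of squared modified dimensions of the simple $\Uo_a$-modules. I would first list the simples: for $a\in G'$ generic, $\Uo_a$ is semisimple, and because $K^{\ell'}=\xi^{\ell'a}$ is a central scalar the highest weights are forced to be $\lambda_k=\xi^{a+2k}$ with $0\le k<\ell'$, each carrying an $\ell'$-dimensional typical module $V_k$; the count $\ell'\cdot(\ell')^2=(\ell')^3=\dim\Uo_a$ shows these are all. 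Writing $\sio_a=\sum_k\mathsf d_k\tr_{V_k}$ (a genuine trace by \eqref{eq:mucyc}) and testing against the basis vectors $E^{\ell'-1}F^{\ell'-1}K^{n}$, the standard $\slt$ identity for $E^mF^m$ on a highest weight vector shows that $E^{\ell'-1}F^{\ell'-1}K^{n}$ acts on $V_k$ as a rank-one operator of trace $\lambda_k^{\,n}\gamma_k$, with $\gamma_k=\qn{\ell'-1}!\,\prod_{j=1}^{\ell'-1}\qn{a+2k+1-j}\big/\qn1^{2(\ell'-1)}$. Because $\xi^{2}$ is a primitive $\ell'$-th root of unity, the relations $\sio_a(E^{\ell'-1}F^{\ell'-1}K^{n})=\eta\,\delta_{n,0}$ form a discrete Fourier system whose solution is $\mathsf d_k\gamma_k=\eta/\ell'$, hence $\mathsf d_k=\eta/(\ell'\gamma_k)$.

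The main obstacle is the final evaluation of $\sum_k\mathsf d_k^{\,2}=(\eta/\ell')^2\sum_k\gamma_k^{-2}$. Using $\xi^{\ell'}=-1$ and the cyclotomic product identity $\prod_{i=0}^{\ell'-1}\qn{x+i}=(-1)^{\ell'-1}\xi^{-\ell'x-\ell'(\ell'-1)/2}\big(\xi^{2\ell'x}-1\big)$, the product in $\gamma_k$ collapses so that $\gamma_k^{-2}$ becomes proportional to $\qn{a+2k+1-\ell'}^{2}/\qn{\ell'a}^{2}$, reducing the computation to the character sum $\sum_{k=0}^{\ell'-1}\qn{a+2k+1-\ell'}^{2}=\sum_k\big(\xi^{2x_k}+\xi^{-2x_k}\big)-2\ell'$ with $x_k=a+2k+1-\ell'$ and $\xi^{2x_k}=(\text{const})\cdot(\xi^{4})^{k}$. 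The decisive point is that $\xi^4$ is a nontrivial $\ell'$-th root of unity exactly when $\ell>4$, where the geometric sum vanishes and the character sum equals $-2\ell'$; whereas for $\ell=4$ one has $\xi^4=1$, the geometric sum contributes $\ell'$, and an extra factor $(\xi^a+\xi^{-a})^{2}=\qn{2a}^2/\qn a^2$ survives. This $\xi^4=1$ degeneration is precisely what splits the answer into the two stated cases; carefully tracking the surviving powers of $\qn1$, $\qn{\ell'-1}!$ (via $\qn{\ell'-1}!^{2}=(-1)^{\ell'-1}(\ell')^{2}$) and $\ell'$ is the delicate part, where sign and normalization slips are the real risk.
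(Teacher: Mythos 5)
Your overall route is the same as the paper's: present $M$ as $0$-framed surgery on an unknot colored by $a$ (so $s=0$), observe that $J$ of the closed unknot is $g_a$ on which $\si_a$ vanishes by \eqref{eq:symint}, and reduce $\HV'$ to $\si'_a(1_a)=\si_a(z_a)=\sum_k d_k^2$ over the $\ell'$ simple $\ell'$-dimensional $\Uo_a$-modules, with the case split coming from whether the geometric sum in $\xi^4$ vanishes ($\ell>4$) or not ($\ell=4$). The one genuine difference is in the middle step: the paper simply quotes the modified-dimension formula $d_{\alpha+2k}=d_0\,\ell'\qn{\alpha+2k}/\qn{\ell'\alpha}$ from \cite{Ha18,BBG}, whereas you re-derive the $d_k$ from scratch by computing $\tr_{V_k}(E^{\ell'-1}F^{\ell'-1}K^n)=\lambda_k^n\gamma_k$ and inverting the resulting discrete Fourier system against the normalization $\sio_a(E^{\ell'-1}F^{\ell'-1}K^n)=\eta\delta_{n,0}$, getting $d_k\gamma_k=\eta/\ell'$. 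That derivation is correct and self-contained, and it even pins down a point the citation leaves ambiguous: your $\gamma_k$ gives $d_k\propto\qn{a+2k+1-\ell'}/\qn{\ell'a}$, which agrees with the paper's $\qn{a+2k}$ only up to a relabeling of $k$ when $\ell'$ is odd; when $\ell'$ is even the two index sets differ by an odd shift, and this matters precisely in the $\ell=4$ case. So your approach is not merely equivalent bookkeeping there — it is the version you can actually trust.

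The genuine gap is that you stop before the step that constitutes the content of the proposition. The statement is two explicit closed-form values, and your proposal ends by saying that tracking the powers of $\qn1$, $\qn{\ell'-1}!$ and $\ell'$ is ``the delicate part'' without doing it. Everything you have established up to that point (vanishing of $\HV$, proportionality of the answer to $\qn{\ell'a}^{-2}$, the $-2\ell'$ versus $\ell=4$ dichotomy) determines the answer only up to an overall $a$-independent constant, so the stated formulas are not verified. This is not a cosmetic omission here: the constants are exactly where things are fragile (note that the paper's own proof ends with $-2\qn1^{4\ell'-4}\eta^2/({\ell'}^3\qn{\ell'a}^2)$, which differs from the displayed statement by ${\ell'}^3$, and the $\ell=4$ line is asserted in the statement but never derived in the proof). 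To complete your argument you must carry out the substitution $\qn{\ell'-1}!^2=(-1)^{\ell'-1}{\ell'}^2/\qn1^{2\ell'-2}$ and the cyclotomic collapse explicitly, and in the $\ell=4$ case decide between $\sum_k\qn{a+2k}^2$ and $\sum_k\qn{a+2k+1}^2$ (they differ in sign on the non-constant term), since only one of them is what your $\gamma_k$ actually produces.
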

\begin{proof}
  A surgery presentation of $M$ is given by an unknot colored by
  $a$. The universal invariant is then $g_a=\xi^{-\ell'a}K$ and
  $\si_a$ vanishes on it.  Let now $\alpha\in\C$ be in the class of
  $a\in\C/2\Z\setminus\{\wb0,\wb1\}$.  To compute $\HV'$ we open the
  unknot to obtain an open straight strand.  The universal invariant is then
  $1_a$ and one should compute
 $$\HV'(M,\omega)=\si'_a(1_a)=\si_a(z_a).$$  Now $\Uo_a$ has $\ell'$
  irreducible representations of dimensions $\ell'$:
  $\{V_{\xi^{\alpha+2k}}:k=0\cdots\ell'-1\}$ where $\xi^{\alpha+2k}$
  is the highest weight of $V_{\xi^{\alpha+2k}}$ (i.e. the eigenvalue
  of $K$ on the kernel of $E$ in $V_{\xi^{\alpha+2k}}$). Let us write
  $z_a=\sum_{k=0}^{\ell'-1}\frac{d_{\alpha+2k}}{\ell'}z_{\alpha+2k}$
  as in Remark \ref{R:z_a}. As shown by the second author in
  \cite{Ha18} (see also \cite{BBG}),
  $d_{\alpha+2k}=\tr^\C(L_{z_az_{\alpha+2k}})=\sio_a(z_{\alpha+2k})$
  is (a normalization of) the modified dimension of
  $V_{\xi^{\alpha+2k}}$.  The explicit computation done for an even root
  of unity gives that
  $$d_{\alpha+2k}=d_0\frac{\ell'\qn{\alpha+2k}}{\qn{\ell'\alpha}}
  \text{ where }d_0=\frac{\qn1^{2\ell'-2}}{{\ell'}^3}\eta.$$
  Then for $\ell>4$,
  \begin{align*}
    \si'(1_a)&=\sum_{k=0}^{\ell'-1}d_{\alpha+2k}^2=\sum_{k=0}^{\ell'-1}d_0^2\ell'^2
               \dfrac{\xi^{2(a+2k)}+\xi^{-2(a+2k)}-2}{\qn{\ell'a}^2}\\
             &=\dfrac{d_0^2\ell'^2}{\qn{\ell'a}^2}
               \bp{-2\ell'+\xi^{2a}\sum_{k=0}^{\ell'-1}\xi^{2k}
               +\xi^{-2a}\sum_{k=0}^{\ell'-1}\xi^{-2k}}\\
             &=-2\dfrac{{\ell'}^3d_0^2}{\qn{\ell'a}^2}
               =-2\frac{\qn1^{4\ell'-4}}{{\ell'}^3\qn{\ell'a}^2}\eta^2,
  \end{align*}
  where we used that the last two sum vanish for $\ell>4$.
\end{proof}
The example of this section is the Hopf $G$-coalgebra version of the invariants of \cite{GHP20}.
\bibliographystyle{plain}
\bibliography{Reference}
\end{document}